\newtheorem{theorem}{Theorem}[section]
\newtheorem{lemma}[theorem]{Lemma}
\theoremstyle{definition}
\newtheorem{remark}[theorem]{Remark}
\numberwithin{equation}{section}
\numberwithin{theorem}{section}
\renewcommand{\Re}{\operatorname{Re}}
\renewcommand{\Im}{\operatorname{Im}}
\numberwithin{equation}{section}
\begin{document}
\title[Density hypothesis for $L$-functions associated with cusp forms]{On the density hypothesis for $L$-functions associated with holomorphic cusp forms}

\author[B. Chen]{Bin Chen}
\thanks{B. Chen gratefully acknowledges support by the China Scholarship Council (CSC)}
\address{B. Chen\\ Department of Mathematics: Analysis, Logic and Discrete Mathematics\\ Ghent University\\ Krijgslaan 281\\ B 9000 Ghent\\ Belgium}
\email{bin.chen@UGent.be}
\author[G. Debruyne]{Gregory Debruyne}
\thanks{G. Debruyne gratefully acknowledges support by Ghent University through a postdoctoral fellowship (grant number 01P13022)}
\address{G. Debruyne\\ Department of Mathematics: Analysis, Logic and Discrete Mathematics\\ Ghent University\\ Krijgslaan 281\\ B 9000 Ghent\\ Belgium}
\email{gregory.debruyne@UGent.be}
\author[J. Vindas]{Jasson Vindas}
\thanks{The work of J. Vindas was supported by the Research Foundation--Flanders, through the FWO-grant number G067621N}
\address{J. Vindas\\ Department of Mathematics: Analysis, Logic and Discrete Mathematics\\ Ghent University\\ Krijgslaan 281\\ B 9000 Ghent\\ Belgium}
\email{jasson.vindas@UGent.be}
\subjclass[2020]{Primary 11M26,  11M41; Secondary 11M06, 11F11, 11F66.}
\keywords{Density hypothesis; Zero-density estimates; Riemann zeta function; Holomorphic modular forms}
\begin{abstract} 
We study the range of validity of the density hypothesis for the zeros of $L$-functions associated with cusp Hecke eigenforms $f$ of even integral weight and prove that $N_{f}(\sigma, T) \ll T^{2(1-\sigma)+\varepsilon}$ holds for $\sigma \geq 1407/1601$. This improves upon a result of Ivi\'{c}, who had previously shown the zero-density estimate in the narrower range  $\sigma\geq 53/60$.  Our result relies on an improvement of the large value estimates for Dirichlet polynomials based on mixed moment estimates for the Riemann zeta function. The main ingredients in our proof are the Hal\'{a}sz-Montgomery inequality, Ivi\'{c}'s mixed moment bounds for the zeta function, Huxley's subdivision argument, Bourgain's dichotomy approach, and Heath-Brown's bound for double zeta sums. 

\end{abstract}

\maketitle

\section{Introduction}\label{intro}

Zero-density estimates for the Riemann zeta function and 
 $L$-functions play a central role in analytic number theory. They have important arithmetic consequences; see for instance \cite[Chap. 12]{Ivicbook} and \cite[Chap. 15]{MontgomeryBook} for an overview of applications in prime number theory.

 Let
  $N(\sigma, T) $ denote the number of zeros $\rho = \beta + it$ of the Riemann zeta function $\zeta(s)$ in the rectangle $\sigma \leq \beta \leq 1$, $|t| \leq T$. In 1937, Ingham \cite{Ingham1937} connected estimates of the form ($ c,D > 0$)
\begin{equation} \label{defUB}
	N(\sigma , T) \ll T^{c(1-\sigma)}\log^{D} T, \qquad \mbox{uniformly for } \frac{1}{2}\leq \sigma \leq1,
\end{equation}
with the behavior of primes in short intervals. In fact, one can prove that \eqref{defUB} implies the PNT in the form $\psi(x+h)-\psi(x)= h (1+o(1))$ for $h\gg x^{1+\varepsilon-1/c}$ as $x\to\infty$. Note that the estimate \eqref{defUB} with $c=2$ essentially provides the same result as the Riemann hypothesis. 
As this turns out to be the case for many other arithmetic results that are also obtainable from the Lindel\"{o}f or the Riemann hypothesis, an inequality of the sort

\begin{equation}\label{DefDH2}
	N(\sigma , T) \ll T^{2(1-\sigma)+\varepsilon},
\end{equation}
has become known as the \emph{density hypothesis}. While a proof that the density hypothesis holds uniformly for $1/2\leq \sigma \leq1$ seems to be out of reach by present methods, there has been substantial progress towards maximizing 
its range of validity. Montgomery showed \cite{Montgomery1969} that the density hypothesis \eqref{DefDH2} holds in the range $\sigma \geq 9/10$. The range of validity was subsequently improved (cf. \cite{Huxley1972,Ramachandra1975,F-V1973,Huxley1975,Jutila1977}), and the current record is due to Bourgain \cite{Bourgain2000}, who showed that \eqref{DefDH2} is valid for $\sigma\geq 25/32=0.78125.$

 It is also natural to consider zero-density estimates for Dirichlet $L$-functions \cite{B-R1974, Heath-Brown1979, Heath-Brown1979sec,  Huxley1973, Huxley and Jutila1977, Jutila1972, Jutila1977} and  for $L$-functions associated with modular forms \cite{Hough2012,Ivic1992,Kamiya1998,Ren-Zhang2019, Y.Y}. We are interested in studying the density hypothesis for the latter case. So, let $f(z)=\sum_{n=1}^{\infty}a_{f}(n)e^{2\pi i z}$ be a holomorphic cusp form of even integral weight $\kappa$ for the full modular group $\mbox{SL}(2,\mathbb{Z})$. We assume that $f$ is a Hecke eigenform \cite{Apostolbook,DSbook}, and that it is normalized, i.e. $a_{f}(1)=1$. We set $\lambda_{f}(n) = a_f (n)n^{-\frac{\kappa-1}{2}}$ and notice that this multiplicative function satisfies $|\lambda_{f}(n)|\leq d(n)$, where $d(n)$ is the divisor function, an inequality that was shown by Deligne \cite[Thm. 8.2, p. 302]{P. Deligne} as a consequence of his proof of Weil's conjectures. The $L$-function $L(s,f)$ associated with $f$ is defined as
\[
L(s,f)= \sum_{n=1}^{\infty}\frac{\lambda_{f}(n)}{n^{s}}=\prod_{p} \left( 1- \lambda_{f}(p)p^{-s}+p^{-2s}\right)^{-1}, \ \ \ \Re(s)>1.
\]
A classical result of Hecke establishes that $L(s,f)$ extends to the whole complex plane as an entire function of $s$.

Denote by $N_{f}(\sigma, T)$ the number of zeros $\rho = \beta + it$ of $L(f,s)$ in the rectangle $\sigma \leq \beta \leq 1$, $|t| \leq T$. In 1989, Ivi\'{c} \cite{Ivic1992} showed that $N_{f}(\sigma, T) \ll T^{2(1-\sigma)+\varepsilon}$ holds for $\sigma \geq 53/60$. The successful establishment of the density hypothesis in the range $\sigma \geq 1/2$ should be one of the key
 ingredients for obtaining estimates for the asymptotic distribution of $\lambda_{f}(p)$ for primes in short intervals that are as good as if one were to assume the grand Riemann hypothesis. 
 
The main goal of this paper is to improve Ivi\'{c}'s result by showing\footnote{The argument presented here also allows one to provide a range of validity for the density hypothesis for other types of $L$-functions, provided the $L$-function admits an Euler product-type identity, satisfies the Ramanujan conjecture for its coefficients, has an analogue for Good's second moment estimate and admits polynomial bounds in the desired half-plane.}:
\begin{theorem} \label{main1} We have the bound 
	\begin{equation*}
		N_{f}(\sigma, T) \ll_{f, \varepsilon} T^{2(1-\sigma)+\varepsilon}
	\end{equation*}
for $\sigma \geq 1407/1601$.
\end{theorem}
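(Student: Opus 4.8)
The plan is to follow the classical Ingham–Montgomery–Huxley–Bourgain machinery for zero-density estimates, adapting it to the modular $L$-function $L(s,f)$. The first step is a standard reduction: by a zero-detection argument (Montgomery's method), each zero $\rho = \beta + it$ with $\beta \geq \sigma$ is detected by showing that a Dirichlet polynomial of the shape $M(s) = \sum_{n \leq X} \lambda_f(n) \mu(n) \cdots$ (a mollifier truncation of $L(s,f)^{-1}$) satisfies $\bigl| \sum_{X < n \leq Y} a_n n^{-\rho} \bigr| \gg 1$ for a suitable smoothed coefficient sequence with $|a_n| \ll d(n) \log n$. After choosing zeros that are well-spaced in $t$ (losing only a factor $\log T$), the problem becomes one of bounding the number $R$ of points $s_r = \sigma + it_r$ at which $|D(s_r)| = \bigl| \sum_{N < n \leq 2N} a_n n^{-s_r} \bigr| \geq V$ for a Dirichlet polynomial of length $\asymp N$ and $V \asymp N^{1-\sigma}$ roughly; here $N$ ranges up to $\asymp T^{2+\varepsilon}$ so that the mean value theorem alone gives the density hypothesis for $\sigma$ close to $1$, and the work is to push the threshold down.

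The core of the argument is the large-values estimate. Here I would apply the Halász–Montgomery inequality to bound $R$ in terms of $V$, $N$, and the mean square of $D$, which by the approximate functional equation and Deligne's bound reduces to understanding the diagonal and off-diagonal contributions; the off-diagonal terms are controlled by Heath-Brown's bound for the double zeta sum $\sum_{r \neq s} (t_r - t_s)^{-1} \cdots$ together with the Ramanujan-type bound $|\lambda_f(n)| \leq d(n)$. This yields a first bound of the form $R \ll N^{2-2\sigma + \varepsilon} + \bigl(\text{something involving } T^{\alpha} V^{-\beta}\bigr)$. To gain beyond this I would split into cases according to the relative sizes of $N$, $T$, and $V$ — this is \emph{Huxley's subdivision argument} — and feed in \emph{Ivi\'{c}'s mixed moment estimates} for $\zeta(s)$ (bounds for $\int_T^{2T} |\zeta(1/2+it)|^{A} |\zeta(\sigma+it)|^{B}\,dt$), which enter because the Halász–Montgomery method, when one raises $D$ to a power and invokes the large sieve after reflecting via the functional equation of $\zeta$, produces precisely such mixed moments. \emph{Bourgain's dichotomy} is used at the point where one must decide whether the large values are "spread out" (handled by the large sieve / mean value estimate) or "concentrated" (handled by the Halász–Montgomery / moment input); optimizing the crossover point is what determines the final exponent.

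The last step is the numerical optimization: one collects the several competing bounds for $R$ obtained in the various ranges, imposes $R \ll T^{2(1-\sigma)+\varepsilon}$ uniformly, and solves for the largest admissible $\sigma$. Tracking the exponents through the Halász–Montgomery step, Heath-Brown's double-sum bound, and Ivi\'{c}'s mixed-moment exponents leads to a linear-programming-type problem whose solution is $\sigma = 1407/1601$; the somewhat unusual denominator reflects the interaction of these several exponents. One also needs the standard reflection step: for $\sigma$ bounded away from $1$ the functional equation and the convexity (or subconvexity) bound for $L(s,f)$ on the critical line, combined with the analogue of Good's fourth-moment estimate $\int_0^T |L(1/2+it,f)|^4\,dt \ll T^{1+\varepsilon}$, are what allow the mean-square of the Dirichlet polynomial to be estimated efficiently when $N$ is large compared to $T$.

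\textbf{Main obstacle.} The hardest part will be the large-values estimate in the intermediate range of $N$, where neither the trivial mean value bound nor a single application of Halász–Montgomery suffices: one must combine Huxley's subdivision with Bourgain's dichotomy and the mixed moment bounds in a way that keeps every error term under the $T^{2(1-\sigma)}$ barrier, and it is the careful bookkeeping of these simultaneous constraints — rather than any single inequality — that both drives the improvement over Ivi\'{c}'s $53/60$ and pins down the precise value $1407/1601$.
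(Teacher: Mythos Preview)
Your outline names the right cast of characters --- zero-detection, Hal\'asz--Montgomery, Huxley subdivision, Ivi\'c's mixed moments, Bourgain's dichotomy, Heath-Brown's double sum --- but several of the connecting mechanisms are misdescribed, and two ingredients are genuinely wrong or missing in a way that would block the proof.

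First, you invoke ``Good's fourth-moment estimate $\int_0^T |L(1/2+it,f)|^4\,dt \ll T^{1+\varepsilon}$''. No such result is known; Good's theorem is the \emph{second} moment bound $\int_0^T |L(1/2+it,f)|^2\,dt \ll T\log T$, and that is what disposes of the class-II zeros. Relatedly, the zeta function does not enter via ``raising $D$ to a power and reflecting via the functional equation of $\zeta$'': it appears because the kernel $\sum_n (e^{-n/2N}-e^{-n/N})n^{-it}$ in the Hal\'asz--Montgomery inequality is expressed by Mellin inversion as an integral of $\zeta$ along $\Re s = 1/2$. Ivi\'c's ``mixed moments'' are then not integrals of $|\zeta(1/2+it)|^A|\zeta(\sigma+it)|^B$ but rather $q$-th moments of $\zeta(1/2+it)$ restricted to the sets where $|\zeta|$ is large or small; and Bourgain's dichotomy is not ``spread out versus concentrated'' for the Dirichlet polynomial, but a case split on whether these restricted $\zeta$-integrals, weighted by the pair-counting function $\Delta_\alpha(\ell)$, are below or above a threshold --- in the latter case one manufactures an auxiliary multiset of zeros and brings in Heath-Brown's bound $\sum_{t,t'}|\sum_n n^{i(t-t')}|^2 \ll T^\varepsilon(|R|N^2+|R|^2N+N|R|^{5/4}T^{1/2})$.

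Second, and crucially, your account gives no source for the number $1407/1601$: the paper's improvement over Ivi\'c's $53/60$ comes not only from the dichotomy but from feeding in Bourgain's exponent pair $(13/84+\varepsilon,55/84+\varepsilon)$ (processed via $A$ then $B$ to $(55/194+\varepsilon,110/194+\varepsilon)$), which fixes the admissible mixed-moment parameters $(q_1,B_1)=(1048/55,\,3+\varepsilon)$. Without this input the optimization cannot reach $1407/1601$. Also, with $Y=T$ and $\nu=1$ the Dirichlet-polynomial length satisfies $T^{1/2}\le N < T^{1+\varepsilon}$, not $N\ll T^{2+\varepsilon}$; the lower bound $N\ge T^{1/2}$ is used repeatedly in the Case~2 analysis.
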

Here $1407/1601\approx 0.8788$, while $53/60= 0.8833...$. 
We now describe the general strategy for our proof of Theorem \ref{main1}. The first step, which is standard, is to apply the zero-detection method to divide the zeros of $L(s,f)$ into two categories, the so-called class-I zeros and class-II zeros. The number of class-II zeros is directly estimated by using Good's second moment estimate for $L$-functions associated with holomorphic cusp forms \cite{Good}.  The innovation of our work is to achieve sharper estimates for the class-I zeros than those obtained by Ivi\'{c} in \cite{Ivic1992}.

We seek to obtain an upper bound for the class-I zeros by applying the Hal\'{a}sz-Montgomery inequality. Ivi\'{c}'s argument is then to combine Huxley's subdivision technique with the direct insertion of mixed moment bounds for the zeta function into this inequality (cf. Remark \ref{remark Ivic method}). Our improvement is based on two aspects. The first one is an optimization of the parameters in the mixed moment estimates. Here we rely on the newly established \emph{exponent pair} $(13/84 + \varepsilon, 55/84 + \varepsilon)$ due to Bourgain \cite{Bourgain2016}. The second aspect that leads to an additional improvement is the incorporation of a dichotomy technique developed by Bourgain in \cite{Bourgain2000} to achieve the current record of the range of validity of the density hypothesis for the Riemann zeta function. The crucial point of the dichotomy is that it allows one to apply Heath-Brown's estimate on double zeta sums \cite{HBLV} which is more efficient than the mixed moment bounds in certain ranges. In Bourgain's original paper the dichotomy approach is a bit difficult to follow; one of the goals of this paper is to explain the underlying ideas and its advantages more clearly. 

In addition to improving the range of validity of the density hypothesis for the zeros of the $L$-functions associated with holomorphic cusp forms, our argument can, with only mild adjustments, also be applied to obtain a zero-density estimate for the Riemann zeta function. In order to further demonstrate the strength of the dichotomy method we prove:
\begin{theorem} \label{main2} There holds
	\begin{equation*}
		N(\sigma, T) \ll_{\varepsilon} T^{\frac{24(1-\sigma)}{30\sigma -11}+\varepsilon}
	\end{equation*}
	for $279/314 \leq \sigma \leq 17/18$.
\end{theorem}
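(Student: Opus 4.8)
\emph{Proof strategy.} The plan is to re-run the machinery behind Theorem~\ref{main1}, now specialised to $\zeta(s)$: one replaces the coefficients $\lambda_f(n)$ by the constant sequence $1$ and Good's second moment bound by the classical fourth power moment $\int_0^{T}|\zeta(\tfrac12+it)|^{4}\,\mathrm{d}t\ll T\log^{4}T$. First I would apply the zero-detection method with a Dirichlet polynomial mollifier of length $X=T^{\lambda}$, sorting the zeros $\rho=\beta+i\gamma$ with $\beta\geq\sigma$, $|\gamma|\leq T$ (all but $O(T^{\varepsilon})$ of them) into class-I zeros, detected by a moderately short sum $\bigl|\sum_{n\sim N}a_n n^{-\rho}\bigr|\gg(\log T)^{-1}$ with divisor-bounded coefficients, and class-II zeros, detected by a Dirichlet polynomial of length close to $T$. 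The class-II zeros are counted directly via the fourth moment and the functional equation, contributing an amount that is dominated by the target bound throughout $279/314\leq\sigma\leq 17/18$; so, after passing to a $1$-separated set of zeros and summing over the $O(\log T)$ relevant dyadic scales, the theorem reduces to the large values estimate
\begin{equation*}
  R:=\#\Bigl\{\gamma\ \text{$1$-separated in }[T,2T]:\ \Bigl|\sum_{n\sim N}a_n n^{-i\gamma}\Bigr|\geq V\Bigr\}\ \ll\ T^{\frac{24(1-\sigma)}{30\sigma-11}+\varepsilon},\qquad V\gg N^{\sigma}T^{-\varepsilon},
\end{equation*}
for each class-I scale $N$, the coefficients still satisfying $|a_n|\leq d(n)$.

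\emph{Estimating the class-I zeros.} To bound $R$ I would apply the Hal\'asz--Montgomery inequality, which, after separating off the diagonal, reduces the task to bounding $\Sigma:=\sum_{r\neq s}\bigl|\sum_{n\sim N}n^{i(\gamma_r-\gamma_s)}\bigr|$ over the $1$-separated differences $\gamma_r-\gamma_s\in[-2T,2T]$; thus one needs a sharp estimate for $\sum_{v}\bigl|\sum_{n\sim N}n^{iv}\bigr|$ over well-spaced $v$. Following Ivi\'c, one expresses the inner exponential sum through $\zeta(\tfrac12+iv)$ by means of the approximate functional equation and then inserts mixed moment bounds of the form $\int_{T}^{2T}|\zeta(\tfrac12+it)|^{2k}|\zeta(\sigma+it)|^{2\ell}\,\mathrm{d}t$. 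The first improvement over Ivi\'c's argument enters here: instead of fixing $(k,\ell)$ together with the van der Corput exponents, I would optimise $(k,\ell)$ jointly with the exponent pair fed into the mixed moment bounds, exploiting Bourgain's exponent pair $(13/84+\varepsilon,\,55/84+\varepsilon)$ \cite{Bourgain2016}. Feeding the resulting estimate for $\Sigma$ back into the Hal\'asz--Montgomery inequality yields a bound of the shape $R\ll T^{\varepsilon}\bigl(N^{2(1-\sigma)}+T^{a}N^{-b}\bigr)$ with $a,b$ explicit.

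\emph{The dichotomy and the optimisation.} The second, decisive gain is Bourgain's dichotomy \cite{Bourgain2000}. Using Huxley's subdivision argument to split the ranges of $N$ and of the differences $v$, one isolates a sub-regime in which the mixed moment input is wasteful; there I would instead reflect the long sums through the functional equation and invoke Heath-Brown's large values estimate for double zeta sums \cite{HBLV}, which captures cancellation in both summation variables at once and is superior precisely in that sub-regime. The concluding step is the optimisation: one selects $\lambda$ and the threshold separating the two branches of the dichotomy, and then maximises the resulting exponent over the admissible range of class-I scales $N$ and over the $O(\log T)$ decomposition of the interval $\sigma\leq\beta\leq 1$ of abscissae of zeros; this produces the exponent $\tfrac{24(1-\sigma)}{30\sigma-11}$ and, along with it, the endpoints $279/314$ and $17/18$, between which this particular combination of inputs is both valid and optimal. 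I expect the genuine obstacle to be exactly this multi-parameter optimisation inside the dichotomy: tracking the precise sub-ranges of $N$ on which the mixed moment branch beats the Heath-Brown branch and conversely, checking that Bourgain's exponent pair is admissible in each mean-value estimate into which it is substituted, and confirming that neither the class-II zeros nor the $O(T^{\varepsilon})$ exceptional zeros ever take over. None of these is deep in isolation, but the bookkeeping must be carried out carefully to land precisely on the stated exponent.
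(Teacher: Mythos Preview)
Your plan misidentifies where the exponent $\tfrac{24(1-\sigma)}{30\sigma-11}$ and the endpoints come from, and this would prevent you from landing on the stated result. In the paper (see Appendix~\ref{eszeta}) the exponent is \emph{not} the outcome of optimising with Bourgain's pair $(13/84+\varepsilon,55/84+\varepsilon)$; it is inherited verbatim from Ivi\'{c}'s 1980 argument, which is pinned to the fixed choices $(q_0,B_0)=(6,1+\varepsilon)$, $(q_1,B_1)=(19,3+\varepsilon)$ (equivalently the exponent pair $(2/7,4/7)$), together with $\nu=2$ and $Y=T^{6/(30\sigma-11)}$. In particular the class-II zeros are \emph{not} disposed of via the fourth moment but via the mixed sixth and nineteenth moments, giving $|R_2|\ll (TY^{3-6\sigma}+T^{3}Y^{19(1/2-\sigma)})T^{\varepsilon}$; it is precisely this bound, balanced against the class-I contribution, that produces $24(1-\sigma)/(30\sigma-11)$ and the upper endpoint $17/18$. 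If you substitute the fourth moment and Bourgain's exponent pair as you propose, you will obtain a different $Y$, a different admissible range for $N$, and ultimately a different exponent in $T$ --- possibly a better one in places, but not the one in the theorem.

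The only new input in the paper is the lower endpoint: Ivi\'{c} already had the estimate for $155/174\le\sigma\le 17/18$, and the paper's sole task is to push $155/174$ down to $279/314$. This is done by keeping Ivi\'{c}'s parameters unchanged and inserting Bourgain's dichotomy plus Heath-Brown's double zeta sum estimate into the class-I analysis, using the range $N\ge T^{8/(30\sigma-11)}$ (coming from $\nu=2$) rather than $N\ge T^{1/2}$. So the correct strategy is: reproduce Ivi\'{c}'s setup exactly (same $Y$, same $\nu$, same moments, same exponent pair), note that his argument already works for $N\ge T^{\frac{6}{65-84\sigma}\cdot\frac{35-54\sigma}{30\sigma-11}}$, and then apply the dichotomy only in the remaining window $T^{8/(30\sigma-11)}\le N\le T^{\frac{6}{65-84\sigma}\cdot\frac{35-54\sigma}{30\sigma-11}}$ to squeeze out the extra range. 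Your outline has the right tools for this last step, but the surrounding scaffolding needs to match Ivi\'{c}'s rather than being re-optimised.
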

This improves on the condition $155/174 \leq \sigma \leq 17/18$ obtained by Ivi\'{c} (\cite{Ivic1980}; \cite[Thm. 11.2]{Ivicbook}) in 1980. Observe that $155/174\approx0.8908$ and $279/314\approx0.8885$.

The paper is organized as follows. In Section \ref{zedete} we recall the classical zero-detection method to divide the zeros into class-I zeros and class-II zeros and explain how to handle the class-II zeros.
 In Section \ref{Ivices} we revisit Ivi\'{c}'s original argument involving mixed moment estimates. In Section \ref{BouD} we study Bourgain's dichotomy technique in this context;  we derive a large value estimate for Dirichlet polynomials from which Theorem \ref{main1} follows.
 Finally, the proof of Theorem \ref{main2} will be completed in Appendix \ref{eszeta}.

We adopt the convention that $\varepsilon$ stands for a small positive quantity. Throughout the paper, we allow $\varepsilon$ to change by at most a constant factor on places that we do not always specify. We let $\chi_E$ denote the indicator function of a set $E$. We use $\ll$ and $\gg$ to denote Vinogradov's notation, while implied constants depend at most on $\varepsilon$ and the cusp form $f$.

The authors wish to thank Harold Diamond and J\'anos Pintz for providing access to the articles \cite{Bourgain2002} and  \cite{Ivic1992}, respectively. Furthermore, we would also like to thank Olivier Robert for his swift reply clarifying that his result \cite[Thm. 1]{Robert} does not imply that $(1/12,3/4)$ is an exponent pair.

\section{The zero-detection method}\label{zedete}

Our starting point is a zero-detection method which has become standard by now. As our further analysis heavily uses the concepts that are introduced by this method, we opt, for the convenience of the reader, to briefly recall here the main ideas involved in this technique.

Let $X,Y, T > 1$. We consider an approximate inverse for $L(s,f)$ given by
\[
M_{X}(s,f)= \sum_{n \leq X} \frac{\mu_{f}(n)}{n^{s}}.
\]
where $\mu_{f}(n)$ is the multiplicative function for which 
\begin{align*}  \label{muf} \mu_{f}(p^{k}):=\left\{
	\begin{array}{rcl}
		1,     &      &\mbox{if} \ \ k=0,2,       \\
		-\lambda_{f}(p),               &      &\mbox{if}\ \  k=1,    \\
		0,     &      &\mbox{if} \ \ k\geq 3.      \\
	\end{array} \right.
\end{align*}
This gives
$$ L(s,f) M_X(s,f) = \sum_{n=1}^{\infty} c_n n^{-s}, \ \ \ \Re s > 1,
$$
where $c_n = \sum_{d | n , d \leq X} \mu_f(d) \lambda_f(n/d)$. Observe that $c_1 = 1$, $c_n = 0$ for $1 < n \leq X$ and $|c_n| \ll n^{\varepsilon}$.

Introducing the weight $e^{-n/Y}$ and exploiting the Mellin inversion formula for $e^{-x}$, one finds, for $1/2 <\Re s < 1$,
\begin{align*}
  e^{-1/Y} + \sum_{n > X} c_n n^{-s} e^{-n/Y} 
 & = \frac{1}{2\pi i} \int^{2 + i \infty}_{2 - i \infty} \Gamma(z) Y^{z} L(s + z,f) M_X(s+z,f) \mathrm{d}z 
 \\
 & = \frac{1}{2\pi i} \int^{1/2 - \Re s + i \infty}_{1/2 - \Re s - i \infty} \Gamma(z) Y^{z} L(s + z,f) M_X(s+z,f) \mathrm{d}z \\
 & \phantom{\frac{1}{2\pi i} \int^{1/2 - \Re s + i \infty}_{1/2 - \Re s - i \infty} } {}+ L(s,f)M_X(s,f), 
\end{align*}
where we picked up the residue at $z = 0$ while shifting the line of integration\footnote{The function $L(s,f)$ is polynomially bounded on vertical strips \cite[Cor.~3, p.~334]{Good1975}, which justifies the switching of contour.}. The tail of the sum $\sum_{n > Y\log^2 Y} c_n n^{-s} e^{-n/Y}$ is $o(1)$ as $Y\rightarrow \infty$. If $|\Im s| \leq T$, then also the tails $|\Im z| \geq \log^{2} T$ of the final integral become $o(1)$ as $T \rightarrow \infty$ if $X$ is polynomially bounded in $T$, say, in view of the exponential decay on vertical lines of the $\Gamma$ function and the trivial estimate $M_X(1/2 + iu) \ll X^{1/2 + \varepsilon}$. Therefore, for $Y, T$ sufficiently large, we have
\begin{align*}
&L(s,f)M_X(s,f)  = 1 +  \sum_{X < n \leq Y \log^2 Y } c_n n^{-s} e^{-n/Y} + o(1) \\
&  \ \ \ + \int^{\log^2 T}_{- \log^2 T} \Gamma\left(\frac{1}{2} - \Re s + iu\right) Y^{\frac{1}{2} - \Re s + iu} L\left(\frac{1}{2} + it + iu,f\right)M_X\left(\frac{1}{2} + it + iu,f\right) \mathrm{d}u .
\end{align*}

Thus, if $\rho = \beta + it$ is a zero of $L(s,f)$ with $1/2 < \beta < 1$ and $|t| \leq T$, then either
\begin{equation} \label{I-ze}
\left| \sum_{X< n \leq Y\log^{2}Y}c_n n^{-\rho}e^{-n/Y}\right| \gg 1, 
\end{equation}
or
\begin{equation} \label{II-ze}
	\left| \int_{-\log ^{2} T}^{\log ^{2} T} L\left(\frac{1}{2}+i(t +u), f\right) M_{X}\left(\frac{1}{2}+i (t+u),f \right) Y^{\frac{1}{2}-\beta+iu}\Gamma\left(\frac{1}{2}-\beta+iu\right)\:\mathrm{d}u \right| \gg 1.
\end{equation}
The zeros $\rho$ with $\beta \geq \sigma$ and $|t| \leq T$ for which \eqref{I-ze} holds are referred to as \emph{class-I zeros} while those for which \eqref{II-ze} holds are called \emph{class-II zeros}. As a zero must inevitably belong to one (or both) of these classes we obtain, for $1/2 < \sigma < 1$,
\begin{equation} \label{NOze}
	N_{f}(\sigma, T) \ll (|R_1|+|R_2|)T^{\varepsilon},
\end{equation}	
where $R_{1} = R_1(X,Y,T)$, resp. $R_2$, is the set of class-I, resp. class-II zeros, and $|R_{j}|$ denotes their cardinality.

For both of these classes we now consider a (saturated) subset $\tilde{R}_{j}$ of \emph{well-spaced zeros}; those are subsets of $R_j$ for which the imaginary parts of the zeros are well-spaced in the sense that 
\begin{equation} \label{disze}
	|t_{1} - t_{2}| \geq T^{\varepsilon}, 
\end{equation}
for different $\rho_1 = \beta_1 + it_1$ and $\rho_2 = \beta_2 + it_2$ belonging to $\tilde{R}_j$. Since  $N_f(\sigma, T+ 1) - N_f(\sigma, T) \ll \log T$, as follows from e.g. \cite[Thm.~3.5, p.~156]{Moreno1977}, one can always select a set of well-spaced zeros $\tilde{R}_j$ such that $|\tilde{R}_j| \gg |R_j| / (T^\varepsilon \log T)$. Therefore, the estimate \eqref{NOze} remains valid if we replace $R_j$ by $\tilde{R}_j$.  

\subsection{The contribution of the class-II zeros}

We begin by analyzing the contribution of the well-spaced class-II zeros $\tilde{R}_{2}$. If we set 
\begin{equation*} 
\left|L\left(\frac{1}{2}+i\gamma_r,f\right)\right|=\max_{-\log ^{2} T\leq u \leq \log ^{2} T}\left|L\left(\frac{1}{2}+it_{r} +iu,f\right)\right|,
\end{equation*}
where $t_r$ are the imaginary parts of the class-II zeros, then we find
\begin{equation*} 1 \ll T^{\varepsilon} Y^{1/2-\sigma} \left|L\left(\frac{1}{2}+i\gamma_{r},f\right)\right|,\ \ \ r=1,2, ... ,|\tilde{R}_{2}|,
\end{equation*}
where we have set $X = T^\varepsilon$. We square the above inequality and as the $\gamma_r$ are well-spaced because the class-II zeros are, we may apply\footnote{Good's mean value theorem yields $\int_{0}^{T}|L(1/2+it,f)|^{2}\mathrm{d}t\ll T\log T$. The sum version that we use here can be derived from the integral form along the same lines as it is done for the Riemann zeta function, cf. argument in \cite[p.~200]{Ivicbook}. Alternatively, one may derive a second moment estimate on $L'(s,f)$ and apply Gallagher's lemma, as is e.g. done in \cite{Xu}.} Good's second moment estimate \cite{Good}  to obtain 
\begin{equation*} 
	|\tilde{R}_{2}| \ll T^{\varepsilon} Y^{1-2 \sigma} \sum_{r \leq |\tilde{R}_{2}|} \left|L\left(\frac{1}{2}+i\gamma_{r},f\right)\right|^{2} \ll T^{1+\varepsilon}Y^{1-2\sigma}.
\end{equation*} 
Therefore, upon choosing $Y=T$, we obtain $|\tilde{R}_2| \ll T^{2(1-\sigma) + \varepsilon}$ and this already concludes the analysis of the class-II zeros.

\subsection{Representative class-I zeros}

The rest of the argument is then to bound the contribution of the class-I zeros. First we shall restrict the well-spaced class-I zeros even further. By a dyadic subdivision of the interval $(X, Y \log^2 Y]$, one can find $X \leq M < Y\log^2 Y$ such that 
\begin{equation} \label{proze}
\left| \sum_{M< n \leq 2M}c_n n^{-\rho}e^{-n/Y} \right| \gg \frac{1}{\log Y}
\end{equation}
for at least  $|\tilde{R}_1|\log 2/\log (Y\log^2 Y)$ zeros $\rho$ of $\tilde{R}_1$. The elements of $\tilde{R}_1$ that additionally satisfy \eqref{proze} are called \emph{representative well-spaced zeros} and this subset will be denoted as $R$. We remark that \eqref{NOze} remains valid upon replacing $|R_1|$ by $|R|$.

Next, we are going to find some very useful estimates allowing us to bound the size of a set of representative well-spaced zeros in terms of the moduli of certain Dirichlet polynomials. It also turns out that the most problematic range is when $M$ is small; the following argument shall allow us to take care of the range $M < T^{1/2}$ such that the critical range for $M$ becomes $M \approx T^{1/2}$.

Let $\nu$ be a fixed integer and let $A$ be a multiset consisting of elements of $R$. We shall actually set $\nu = 1$ in the proof of Theorem \ref{main1} and $\nu = 2$ for Theorem \ref{main2}. We consider an integer power $k$ such that $M^{k} \leq Y^{\nu+\varepsilon} <M^{k+1}$. Hence, as we have set $X = T^{\varepsilon}$ and $Y = T$, we deduce $1 \leq \nu \leq k \ll_{\varepsilon} 1$ and $Y^{\frac{\nu^{2}}{\nu+1}}<M^{k}\leq Y^{\nu+\varepsilon}$. Raising \eqref{proze} to the power $k$ we find
\begin{equation*}
	\left| \sum_{M^{k}< n \leq 2^{k}M^{k}}c_n'n^{-\rho} \right| \gg (1/ \log Y)^{k},
\end{equation*}
where $c_n' = \sum_{n_1n_2 \dots n_k = n} c_{n_1} c_{n_2} \dots c_{n_k}e^{-\frac{n_{1}+\cdot\cdot\cdot+n_{k}}{Y}}$. Again $c_n' \ll n^{\varepsilon}$. A dyadic subdivision of the interval $(M^k, 2^k M^k]$ allows us to find $M^{k}\leq N = N(A) \leq 2^{k}M^{k}$ for which
\begin{align*}
|A| \ll & (\log Y)^{k} \sum_{\rho \in A} \left| \sum_{N < n \leq 2N}c_n'n^{-\rho} \right|
=  (\log Y)^{k} \sum_{\rho \in A} \left| \int_{N}^{2N} u^{-\beta}  \mathrm{d} \left(\sum_{N< n\leq u}c_n'n^{-it}\right) \right|
\\
= & (\log Y)^{k} \sum_{\rho \in A} \left| (2N)^{-\beta}\sum_{N< n \leq 2N} c_n'n^{-it}
+\int_{N}^{2N} \beta u^{-\beta-1} \sum_{N< n\leq u}c_n'n^{-it} \:\mathrm{d} u \right|
\\
\ll &  (\log Y)^{k} N^{-\sigma} \max_{N< u \leq 2N}\sum_{\rho \in A} \left|\sum_{N< n \leq u} c_n'n^{-it} \right|.
\end{align*}
If we let $c_n'' = 0$ after the point where the above maximum is reached, but $c_n'' = c_n'$ otherwise, we obtain
\begin{equation*} \label{uppA}
|A| \ll N^{-\sigma +\varepsilon} \sum_{\rho \in A} \left|\sum_{N< n \leq 2N} c_n''n^{-it}\right|, \ \ \ |A| \ll N^{-2 \sigma +\varepsilon} \sum_{\rho \in A} \left|\sum_{N< n \leq 2N} c_n''n^{-it}\right|^{2},
\end{equation*}
the last inequality being derived from Cauchy-Schwarz. If we now set $b(n) = b(n,A) = \epsilon c_n''$ for a sufficiently small $\epsilon$ such that $|b(n)| \leq 1$, we have
\begin{equation} \label{uppAF}
	|A| \ll N^{-\sigma +\varepsilon} \sum_{\rho \in A} \left|\sum_{N< n \leq 2N} b(n)n^{-it}\right|,\ \ \ |A| \ll N^{-2\sigma +\varepsilon} \sum_{\rho \in A} \left|\sum_{N< n \leq 2N} b(n)n^{-it}\right|^{2}.
\end{equation}  
In particular, if one selects $A=R$, we get 
\begin{equation} \label{uppR1'F}
	|R| \ll N^{-\sigma +\varepsilon} \sum_{\rho \in R} \left|\sum_{N< n \leq 2N} b(n)n^{-it}\right|.
\end{equation}

If one were to trivially estimate the right-hand side, one would obtain the bound $N^{1-\sigma + \varepsilon} |R|$ and this delivers no information at all as it is way worse than the trivial bound $|R|$. Our goal in the next section is therefore to realize there is indeed sufficient cancellation in \eqref{uppR1'F} to extract some non-trivial information. 

We do emphasize here again that $N$ and $b(n)$ do depend on the set $A$. Throughout the rest of the paper, we shall write $b(n)$ and $N$ when we refer to the set $R$. If any other set of representative well-spaced zeros $A$ is considered, we shall explicitly mention the dependence of $b(n)$ and $N$ on $A$. On the other hand we note that $M^k \leq N(A) < 2^k M^k$, therefore $N \ll N(A) \ll N$ for any $A$. Furthermore, as we take $\nu = 1$, one has 
\begin{equation} \label{LengthPolyF}
T^{1/2} \leq N < T^{1+\varepsilon}.
\end{equation}

\section{Ivi\'{c}'s Estimate}\label{Ivices}

In this section we deduce a first non-trivial estimate on the number of class-I zeros. The first step is to apply the Hal\'{a}sz-Montgomery inequality to realize there is cancellation in \eqref{uppR1'F}. The following lemma is a reformulation of the estimate in \cite[Eq. 11.40]{Ivicbook}. We closely follow here the proof of \cite[Thm.  11.2]{Ivicbook}.

\begin{lemma} \label{reIv} Let $A \subseteq R$ be a set of representative well-spaced class-I zeros (where we do not allow repetition). For $\ell \in \mathbb {Z}$, define
\begin{equation*} 
	\Delta_{A}(\ell) =\# \{ (\beta+it, \beta'+it') \in A \times A  :     |t-t'-\ell| <1 \}.
\end{equation*}
Then
	\begin{equation} \label{conreIv}
		|A| \ll \left\{N^{2-2\sigma} +N^{3/4-\sigma}\left[ \sum_{\ell \in \mathbb {Z}} \Delta_A (\ell) \int_{-2 \log^{2} T}^{2 \log^{2} T}  \left| \zeta \left(\frac{1}{2}+ iv + i\ell \right)\right|  \: \mathrm{d} v \right]^{\frac{1}{2}}\right\} T^{\varepsilon}.
	\end{equation}
\end{lemma}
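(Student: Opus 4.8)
The plan is to start from the first inequality in \eqref{uppAF} applied to the set $A$, namely
\[
|A| \ll N^{-\sigma+\varepsilon}\sum_{\rho\in A}\left|\sum_{N<n\leq 2N}b(n,A)n^{-it}\right|,
\]
and to estimate the linear large-value sum on the right by the Hal\'{a}sz--Montgomery inequality. Write $S=\sum_{N<n\leq 2N}b(n)n^{-it_\rho}$ for $\rho=\beta+it_\rho\in A$ and pick unimodular coefficients $\xi_\rho$ so that $\sum_\rho|S_\rho|=\sum_\rho \xi_\rho S_\rho = \sum_{N<n\leq 2N}b(n)\sum_\rho \xi_\rho n^{-it_\rho}$. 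By Cauchy--Schwarz in $n$ and positivity one gets $\left(\sum_\rho|S_\rho|\right)^2\ll N\sum_{N<n\leq 2N}\left|\sum_\rho \xi_\rho n^{-it_\rho}\right|^2$, and expanding the square and summing over $n$ (treating $n^{-i(t_\rho-t_{\rho'})}$ as a smooth sum, handled by partial summation / the standard mean value estimate $\sum_{N<n\le 2N} n^{-iu}\ll \min(N,1/\|u/2\pi\|)$ or rather $\ll N/(1+N\|\cdots\|)$, with the classical treatment of the resonance) yields the diagonal term $\ll N\cdot N\cdot|A|$ and off-diagonal terms controlled by $\sum_{\rho\ne\rho'}\min(N, |t_\rho-t_{\rho'}|^{-1})$. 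This is the textbook route that produces $|A|\ll (N^{2-2\sigma}+N^{1-2\sigma}\cdot(\text{off-diagonal}))T^\varepsilon$; however, to bring in the zeta function — which is the whole point of Ivi\'{c}'s refinement — one instead applies Hal\'{a}sz--Montgomery with the vectors $\varphi_\rho=(b(n)n^{-it_\rho})_{N<n\le 2N}$ in the form $\sum_\rho|S_\rho|^2\ll \left(\max_\rho\sum_{\rho'}|\langle\varphi_\rho,\varphi_{\rho'}\rangle|\right)\sum_\rho|S_\rho|$, but here I would instead follow \cite[Thm.~11.2]{Ivicbook} and use the \emph{second} moment form combined with an integral representation: replace $b(n)n^{-it}$ using the approximate functional equation / Mellin trick so that $\sum_{N<n\le 2N}b(n)n^{-it}$ is bounded by an integral of $\zeta(1/2+iv+it)$ against a short kernel, which is where the factor $\int_{-2\log^2 T}^{2\log^2 T}|\zeta(1/2+iv+i\ell)|\,dv$ and the exponent $3/4-\sigma$ originate.

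Concretely, the key steps in order: (i) From $N<n\le 2N$ and partial summation, bound $\left|\sum_{N<n\le 2N}b(n)n^{-it}\right|$ by applying Hal\'{a}sz--Montgomery to get $\sum_{\rho\in A}\left|\sum b(n)n^{-it}\right|^2 \ll \left(\sum_{N<n\le 2N}|b(n)|^2\right)\max_{\rho}\sum_{\rho'\in A}\left|\sum_{N<n\le 2N}n^{i(t_{\rho'}-t_\rho)}\right| \ll N T^\varepsilon \max_\rho\sum_{\rho'}\min(N,\,1+|t_\rho-t_{\rho'}|^{-1}\cdots)$; actually the cleaner path used in the reference is to write the inner sum over $n$ via a contour integral so the relevant kernel is $\zeta(1/2+i(t_\rho-t_{\rho'})+iv)$. (ii) Group the pairs $(\rho,\rho')$ according to $\ell=\lfloor t_\rho-t_{\rho'}\rceil$, so the count of pairs with $|t_\rho-t_{\rho'}-\ell|<1$ is exactly $\Delta_A(\ell)$; the well-spacing \eqref{disze} guarantees that within each such block there are $\ll T^\varepsilon$ pairs, which keeps $\sum_\ell \Delta_A(\ell)\ll |A| T^\varepsilon$ and lets us pull the $\zeta$-integral out of the max cleanly. (iii) The diagonal-type contribution (small $|t_\rho-t_{\rho'}|$, i.e. $\ell=0$ and the trivial bound $\le N$) produces the term $N\cdot N\cdot |A|$, which after the $N^{-\sigma}$ and a square root gives $N^{1-\sigma}|A|^{1/2}$, hence $N^{2-2\sigma}$ after solving for $|A|$; the off-diagonal contribution produces $N\cdot\left[\sum_\ell\Delta_A(\ell)\int|\zeta(1/2+iv+i\ell)|dv\right]$, and carrying the $N^{-\sigma}$ and square root through gives exactly the second term $N^{3/4-\sigma}[\cdots]^{1/2}$ in \eqref{conreIv}. (iv) Combine, absorb all logarithmic and $T^\varepsilon$ losses, and solve the resulting quadratic inequality $|A|\ll N^{2-2\sigma}T^\varepsilon + N^{1-\sigma}T^\varepsilon\cdot N^{-\varepsilon}\cdot(\text{stuff})^{1/2}|A|^{1/2}$ for $|A|$ to land on the stated bound.

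The step I expect to be the main obstacle is (i)--(ii): getting the correct \emph{shape} of the Hal\'{a}sz--Montgomery application so that the zeta function appears with the right exponent $3/4$ (rather than, say, a trivial $\min(N,\cdot)$ bound that would give no zeta at all), and then correctly bookkeeping the off-diagonal sum $\sum_{\rho,\rho'}$ into the block sum $\sum_\ell\Delta_A(\ell)\int|\zeta(1/2+iv+i\ell)|\,dv$. The delicate point is that one must use the integral (Mellin) representation of the Dirichlet polynomial $\sum_{N<n\le 2N}b(n)n^{-it}$ in terms of $L(1/2+i(t+v),f)M_X(1/2+i(t+v),f)$ or, after the dyadic/power reductions already performed in Section~\ref{zedete}, a pure Dirichlet polynomial whose mean square over the well-spaced points is controlled by a zeta integral; keeping track of which $\zeta$ (as opposed to $L(\cdot,f)$) enters, and why only $\zeta(1/2+\cdot)$ and not $L(1/2+\cdot,f)$ shows up here, requires care. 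Everything else — the dyadic bookkeeping, the $T^\varepsilon$ absorption, solving the quadratic — is routine, following \cite[Thm.~11.2]{Ivicbook} essentially verbatim with $\zeta$ in place of the roles it plays there.
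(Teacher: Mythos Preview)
Your overall strategy---Hal\'{a}sz--Montgomery applied to \eqref{uppAF}, express the off-diagonal kernel as a zeta integral by Mellin inversion, group pairs by $\ell$---is exactly the paper's approach. However, your execution has a concrete gap in the exponent bookkeeping, and this is precisely the ``delicate point'' you flagged yourself.

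The missing ingredient is the \emph{smoothed} kernel and the contour shift that produces the factor $N^{1/2}$. The paper takes the duality form $|A|^{2}N^{2\sigma-2\varepsilon}\ll |A|N^{2}+N\sum_{r\neq s}|H(it_r-it_s)|$ with $H(it)=\sum_{n\ge 1}(e^{-n/2N}-e^{-n/N})n^{-it}$, writes $H(it)=\tfrac{1}{2\pi i}\int\zeta(w+it)\Gamma(w)\big((2N)^{w}-N^{w}\big)\,\mathrm{d}w$, and shifts to $\Re w=1/2$. The residue at $w=1-it$ contributes $O(Ne^{-|t|})$, which sums to $o(|A|^{2})$ by well-spacing, and the remaining integral is $\ll N^{1/2}\int_{|v|<\log^{2}T}|\zeta(1/2+it+iv)|\,\mathrm{d}v$. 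Hence the off-diagonal term is $N\cdot N^{1/2}\cdot[\text{zeta sum}]=N^{3/2}[\text{zeta sum}]$, and only then does $|A|^{2}N^{2\sigma}\ll N^{3/2}[\cdots]$ give $|A|\ll N^{3/4-\sigma}[\cdots]^{1/2}$. Your step (iii) states the off-diagonal as ``$N\cdot[\cdots]$'', which would yield $N^{1/2-\sigma}[\cdots]^{1/2}$, not $N^{3/4-\sigma}[\cdots]^{1/2}$; the extra $N^{1/2}$ from the half-line is exactly what you are missing, and it is the reason the exponent $3/4$ appears at all.

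Two smaller corrections. First, the claim in (ii) that $\sum_{\ell}\Delta_{A}(\ell)\ll |A|T^{\varepsilon}$ is false; each ordered pair contributes to at most two values of $\ell$, so $\sum_{\ell}\Delta_{A}(\ell)\le 2|A|^{2}$ (and $\Delta_{A}(\ell)\le |A|$). Fortunately this sum is not needed in the proof of the lemma itself---the grouping $\sum_{r\neq s}\int|\zeta(1/2+i(t_r-t_s)+iv)|\,\mathrm{d}v\ll \sum_{\ell}\Delta_{A}(\ell)\int_{|v|<2\log^{2}T}|\zeta(1/2+i\ell+iv)|\,\mathrm{d}v$ uses only the definition of $\Delta_{A}(\ell)$. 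Second, the reason $\zeta$ rather than $L(\cdot,f)$ appears is simply that the Hal\'{a}sz--Montgomery kernel $H(it)$ is a pure exponential sum $\sum_{n}\psi(n)n^{-it}$ with \emph{no} coefficients $b(n)$; the arithmetic of $f$ has already been absorbed into the $b(n)$'s on the other side of the duality, so the Mellin transform of the kernel is $\zeta$.
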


\begin{proof} As $N \asymp N(A)$, the estimate \eqref{conreIv} is equivalent upon replacing $N$ with $N(A)$. Throughout the rest of the proof, however, we shall simply write $N$ for $N(A)$ in order not to overload the notation unnecessarily. 

By applying the Hal\'{a}sz-Montgomery inequality \cite[Lemma 1.7, p. 6]{MontgomeryBook} to \eqref{uppAF}, we get
\begin{equation} \label{AHM}
	|A| ^{2}N^{2\sigma -2\varepsilon} \ll |A| N^{2} + N\sum_{r \neq s}\left| H (it_{r}-it_{s})\right|,
\end{equation}
where $t_r, t_s$ denote the imaginary parts of elements of $A$ and
\begin{equation*} 
	H(it)= \sum_{n=1}^{\infty} (e^{-n/2N}-e^{-n/N})n^{-it}
	=  \frac{1}{2\pi i} \int_{2-i \infty}^{2+i \infty} \zeta\left(w+ it\right) \Gamma(w)\left( (2N)^{w} -N^{w}\right) \: \mathrm{d}w.
\end{equation*}

We switch the contour to the line $\Re w= 1/2$ which is allowed since $\zeta$ is polynomially bounded and $\Gamma$ decays exponentially on vertical strips. We pass over a simple pole at $w=1-it$ with residue $O(Ne^{-|t|})$ and our equation becomes
\begin{equation*} 
	H(it)
	=  \frac{1}{2\pi i} \int_{1/2-i \infty}^{1/2+i \infty} \zeta\left(w+ it\right) \Gamma(w)\left( (2N)^{w} -N^{w}\right) \: \mathrm{d}w + O\left(Ne^{-|t|}\right).
\end{equation*}

When $|\Im w| \geq \log^{2}T$, the tails of the integral contribute $O(N^{1/2} \exp(-\log^2 T)) = o(1)$ as $N \ll T^{1+\varepsilon}$. Therefore, 

\begin{align*} 
	\sum_{r \neq s} \left|H(it_{r}-it_{s})\right| & \ll N\sum_{r \neq s} e^{-|t_{r}-t_{s}|}+ o\left(|A|^{2}\right)
	\\
	& \ \ \ + N^{1/2} \int_{-\log^{2}T}^{\log^{2}T} \sum_{r \neq s} \left|\zeta\left(\frac{1}{2} +it_{r}-it_{s}+iv\right)\right|  \: \mathrm{d}v.
\end{align*}
The first term on the right hand side is $o\left(|A|^{2}\right)$ as the members of $A$ are well-spaced. Moreover, by the definition of $\Delta_A (\ell)$
, we have
\begin{align*} 
	&\int_{-\log^{2}T}^{\log^{2}T} \sum_{r \neq s} \left|\zeta\left(\frac{1}{2} +it_{r}-it_{s}+iv\right)\right|  \: \mathrm{d}v
	\ll & \sum_{\ell \in \mathbb {Z}} \Delta_A (\ell) \int_{-1-\log^{2}T}^{1+\log^{2}T} \left|\zeta\left(\frac{1}{2} +i\ell+iv\right)\right|  \: \mathrm{d}v.
\end{align*}
As $\sigma > 1/2$ we arrive at \eqref{conreIv} after inserting all these estimates in \eqref{AHM}.
\end{proof}
As usual if we do not mention the subscript $A$ for $\Delta_A$ when we are referring to $A = R$.

It thus remains to find adequate estimates for the integral in \eqref{conreIv}. For this we shall appeal to moment estimates on the zeta function. Let $B_0, B_1, q_0, q_1$ be positive numbers for which $q_0, q_1 \geq 2$, and $H : [0,\infty) \rightarrow [1,\infty)$. We let 
$$\zeta_0 = \zeta_{0,T} = \zeta \chi_{\{s : |\zeta(s)|\geq H(T)\}}, \ \  \zeta_1 = \zeta_{1,T} =\zeta \chi_{\{s : |\zeta(s)|< H(T)\}}, $$
where $\chi$ denotes the characteristic function of a set. In what follows, we rely on an assumption of the form
\begin{equation} \label{mixmo}
	\int_{0}^{T}\left|\zeta_0\left(\frac{1}{2} + it\right)\right|^{q_{0}}\mathrm{d}t \ll T^{B_{0}}, \ \ \ \int_{0}^{T}\left|\zeta_1\left(\frac{1}{2} + it\right)\right|^{q_{1}}\mathrm{d}t \ll T^{B_{1}}. 
\end{equation}
involving mixed moment estimates  for the zeta function. 

Recall $\Delta (\ell) =\# \{ (\beta+it, \beta'+it') \in R \times R  :     |t-t'-\ell| <1 \}.$ We have $\Delta (\ell) \leq |R|$ because the elements of $R$ are well-spaced and $\sum_{\ell \in \mathbb {Z}} \Delta (\ell) \leq 2|R|^{2}$ as each couple $(\beta + it, \beta + it')$ can at most contribute to two $\Delta(\ell)$. It follows that for $q > 1$,
\begin{equation*} \label{diset1}
	\sum_{\ell \in \mathbb {Z}} \Delta (\ell)^{\frac{q}{q-1}} \leq \sum_{\ell \in \mathbb {Z}}\Delta (\ell) \Delta (\ell)^{\frac{1}{q-1}} \leq2|R|^{\frac{2q-1}{q-1}}. 
\end{equation*}
With this estimate the integral in \eqref{conreIv} becomes through some applications of H\"older's inequality
\begin{align*} 
	& \sum_{\ell \in \mathbb {Z}} \Delta (\ell) \int_{-2\log^{2}T}^{2\log^{2}T} \left|\zeta\left(\frac{1}{2} +i\ell+iv\right)\right|  \: \mathrm{d}v 
	\\
	 & \ \ \ \leq  \sum_{j = 0}^{1} \left( \sum_{\ell \in \mathbb {Z}} \Delta (\ell)^{\frac{q_j}{q_j - 1}} \right)^{\frac{q_j - 1}{q_j}} \left\{ \sum_{\ell \in \mathbb {Z}}  \left[\int_{-2\log^{2}T}^{2\log^{2}T} \left|\zeta_{j}\left(\frac{1}{2} +i\ell+iv\right)\right|^{q_j}  \: \mathrm{d}v\right] \right\}^{\frac{1}{q_j}}T^{\varepsilon} 
	\\
	& \ \ \ \ll  |R|^{2 - \frac{1}{q_0}} T^{\frac{B_0}{q_0}+\varepsilon} + |R|^{2 - \frac{1}{q_1}} T^{\frac{B_1}{q_1}+\varepsilon},
	\end{align*}
and thus
    \begin{align*}
    	|R| \ll T^{\varepsilon}\left(N^{2-2\sigma} +T^{B_0}N^{(3-4\sigma) q_0/2}+T^{B_1}N^{(3-4\sigma)q_1/2}\right).
    \end{align*}

We now refine this estimate through Huxley's subdivision argument.
Set
\begin{equation} \label{Hdi}
	T_{0}=\delta_{2}T,\ \ \ \ T^{-1} \leq \delta_{2} \leq 1, 
\end{equation}
and let $R_{\alpha}= \{ \rho \in R : \Im \rho \in I_{\alpha}\}$, with $I_{\alpha} \subset [-T, T ]$ a certain subinterval of length $T_{0}$. 
Repeating the above argument with $R_\alpha$ instead of $R$ then gives\footnote{The subdivision of $\zeta$ into $\zeta_0$ and $\zeta_1$ is now with respect to $H(T_0)$ instead of $H(T)$.}
\begin{align*} 
	|R_{\alpha}| \ll T^{\varepsilon}\left(N^{2-2\sigma} +\delta_{2}^{B_0}T^{B_0}N^{(3-4\sigma)q_0/2}+\delta_{2}^{B_1}T^{B_1}N^{(3-4\sigma)q_1/2}\right).
\end{align*}
Subdividing $R$ into about as many as $\lceil 2/\delta_{2}\rceil$ sets of the form $R_\alpha$ and summing these contributions gives
\begin{align*}
	|R| \ll T^{\varepsilon}\left(\delta_{2}^{-1}N^{2-2\sigma} +\delta_2^{B_0 - 1}T^{B_0}N^{(3-4\sigma)q_0/2}+\delta_{2}^{B_1 - 1}T^{B_1}N^{(3-4\sigma)q_1/2}\right).
\end{align*}
 Now, selecting $\delta_{2}=N^{2-2\sigma}T^{2\sigma-2}$ if $N \leq T$ and $\delta_2 = 1$ if $T < N < T^{1+\varepsilon}$ delivers\footnote{One can optimize the choice for $\delta_2$ even further here. However, we are in this work only interested into which range the density hypothesis is valid. If one only considers this question there is no advantage in further optimizing $\delta_2$.}
\begin{align*} 
	|R| \ll T^{2-2\sigma + \varepsilon} + \sum_{j= 0}^{1}T^{(2B_j - 2)\sigma + (2-B_j) + \varepsilon} N^{(2B_j -2 +3q_j/2) - 2\sigma(B_j + q_j -1)}.
\end{align*}
We thus obtain $|R| \ll T^{2(1-\sigma)+\varepsilon}$ provided $\sigma \geq 1 - q_j/(4B_j + 4q_j -4)$ for $j = 0,1$ and provided $N$ satisfies 
\begin{equation} \label{IvrelenN0}
	N \geq T^{\frac{4B^{\ast}\sigma - 2B^{\ast}}{(4q^{\ast} +4B^{\ast} - 4)\sigma - (3q^{\ast} + 4B^{\ast} -4)}},
\end{equation}
where $(q^{\ast},B^{\ast})$ is the couple $(q_j,B_j)$, $j = 0,1$, for which the exponent above is maximal, which in principle may depend on $\sigma$. If $\sigma$ lies in the range where this exponent is less than $1/2$, that is $\sigma \geq (3q^{\ast}-4)/(4q^{\ast}-4B^{\ast}-4)$, we are done as we always have $N \geq T^{1/2}$. 
In the remaining range $\sigma < (3q^{\ast}-4)/(4q^{\ast}-4B^{\ast}-4)$, we may therefore assume in the sequel that
\begin{equation} \label{IvrelenN}
	T^{1/2} \leq N < T^{\frac{4B^{\ast}\sigma - 2B^{\ast}}{(4q^{\ast} +4B^{\ast} - 4)\sigma - (3q^{\ast} + 4B^{\ast} -4)}}.
\end{equation}

\begin{remark}\label{remark Ivic method} We briefly mention how Ivi\'c arrived at the validity of the density hypothesis in the range $\sigma \geq 53/60$. He selected $q_0 =6, q_1 = 19, B_0 = 1 +\varepsilon$ and $B_1 = 3 + \varepsilon$ with $H(T) = T^{2/13}$. The validity of \eqref{mixmo} under these parameters is guaranteed by \cite[Cor. 8.1, Eq. 8.31]{Ivicbook}. With this choice the condition \eqref{IvrelenN0} becomes $N \geq \max \{T^{6(2\sigma - 1)/(84 \sigma - 65)}, T^{(2\sigma -1)/(12\sigma - 9)} \}$ and this is always satisfied if $\sigma \geq 53/60$ as the exponents of $T$ are then always smaller than $1/2$.
\end{remark}

\section{Bourgain's dichotomy 
}\label{BouD}

In this section, inspired by the work of Bourgain \cite{Bourgain2000, Bourgain2002}, we will use the dichotomy method to improve the estimates for the integral terms appearing in \eqref{conreIv}. This allows us to obtain a new estimate for the class-I zeros.

\subsection{Lemmas on Dirichlet polynomials}

\

In applying Bourgain's method, we shall require some preliminary lemmas on estimations for Dirichlet polynomials. The first lemma gives an upper estimate for pointwise values of a Dirichlet polynomial in terms of an average of the values near the point. It is a slight modification of \cite[Lemma 4.48]{Bourgain2000}.
\begin{lemma} \label{Bourgpoly} Consider the Dirichlet polynomial
	\begin{equation*} 
		F(t)=\sum_{N < n \leq 2N} b_{n} n^{-it},\ \ \ t \in \mathbb{R},
	\end{equation*}
	where the coefficients $b_n$ satisfy $|b_{n}| \leq 1$. Then,
	\begin{equation*} 
		|F(t)| \ll 1 +  \log N \int_{|v|<\log N} |F(t + v)| \:\mathrm{d}v, \ \ \ N\rightarrow \infty.
	\end{equation*}
\end{lemma}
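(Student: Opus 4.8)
The plan is to prove this pointwise-versus-average bound for Dirichlet polynomials by a standard smoothing/mollification argument, exploiting the fact that $F$ is essentially band-limited in a way that makes its local oscillation controllable. First I would observe that $F$ is an entire function of exponential type: writing $n^{-it} = e^{-it\log n}$ with $\log N < \log n \le \log 2N$, the function $F$ is a trigonometric-type polynomial with frequencies confined to an interval of length $\log 2 < 1$. Hence $F'(t) = -\sum_{N<n\le 2N} b_n (\log n) n^{-it}$ satisfies $|F'(t)| \le (\log 2N)\sum_{N<n\le 2N}|b_n| \ll N\log N$, but this crude bound is far too lossy; the point of the lemma is to avoid it.

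The efficient route is to pick a fixed nonnegative bump function, or more precisely to use a Fejér-type kernel adapted to the scale $\log N$. Concretely, I would take $\varphi$ to be a smooth (or piecewise-linear) function supported in $|v| < \log N$ with $\varphi \ge 0$, $\int \varphi = 1$, and whose Fourier transform $\widehat{\varphi}$ satisfies $\widehat{\varphi}(\xi) \gg 1$ for $|\xi| \le 1$ (this is arrangeable since the frequencies $\log n$ all lie within an interval of length $\log 2$, and after a harmless modulation we may center them, so we only need $\widehat\varphi$ bounded below on an interval of length $\log 2 < 1$; a triangle function on $[-\log N,\log N]$ has transform $\asymp (\log N)\,(\mathrm{sinc}(\xi\log N/2))^2$ which is not bounded below on a fixed interval, so instead I would rescale: use $\psi(v) = (\log N)^{-1}\varphi_0(v/\log N)$ where $\varphi_0$ is a fixed bump with $\widehat{\varphi_0}$ bounded below on, say, $[-1,1]$, giving $\widehat\psi(\xi) = \widehat{\varphi_0}(\xi \log N) \gg 1$ only for $|\xi| \ll 1/\log N$, which is too small). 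The clean fix is to convolve $F$ against a kernel on the \emph{frequency} side instead: set $K = \sum_n b_n \delta_{\log n}$, so $F = \widehat{K}$; choose $\Phi$ smooth, $\Phi \equiv 1$ on $[-\log 2, \log 2]$ (or on the centered frequency range), $\Phi$ compactly supported, so that $F = \widehat K = \widehat{K}\cdot \Phi(\text{freq}) $ means $F(t) = (F * \check\Phi)(t)$ where $\check\Phi$ is a fixed Schwartz function. Then $|F(t)| \le \int |F(t+v)||\check\Phi(v)|\,dv$. Splitting the integral at $|v| = \log N$: the inner part contributes $\ll \int_{|v|<\log N}|F(t+v)|\,dv$ (absorbing the bounded factor $\sup|\check\Phi|$), and the outer part, using $|F| \le N$ trivially on all of $\R$ together with the rapid decay $|\check\Phi(v)| \ll_A |v|^{-A}$, contributes $\ll N \int_{|v|\ge\log N}|v|^{-A}\,dv \ll N(\log N)^{1-A} \ll 1$ for $A$ large. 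This yields $|F(t)| \ll 1 + \int_{|v|<\log N}|F(t+v)|\,dv$, which is even slightly stronger than claimed (the extra $\log N$ factor in the statement gives room to spare, e.g.\ if one prefers a compactly supported kernel that is only approximately a reproducing kernel).

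An alternative, avoiding Fourier analysis on distributions: mollify directly. Fix $\Phi$ Schwartz with $\widehat\Phi \ge 0$, $\widehat\Phi \equiv 1$ on $[-1,1]$, supported in $[-2,2]$. For a parameter to be chosen, set $\Phi_\delta(v) = \delta^{-1}\Phi(v/\delta)$; then $\widehat{\Phi_\delta}(\xi) = \widehat\Phi(\delta\xi)$ equals $1$ for $|\xi| \le 1/\delta$. Taking $\delta = 1$ (since all frequencies $\log n \in (\log N, \log 2N]$; after multiplying $F(t)$ by $e^{it\log(\sqrt 2 N)}$, which does not change $|F(t)|$, the frequencies lie in $[-\tfrac12\log 2, \tfrac12\log 2] \subset [-1,1]$) we get $F(t) = (F * \Phi)(t)$ exactly, and then split the convolution integral at $|v| = \log N$ exactly as above. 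This is the cleanest presentation and is what I would write down.

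The only genuine obstacle is bookkeeping: one must make sure the kernel chosen is simultaneously (i) a genuine reproducing kernel for the relevant frequency band, i.e.\ $\widehat\Phi = 1$ on an interval containing all of $\{\log n : N < n \le 2N\}$ after centering, and (ii) rapidly decaying so that the tail $\int_{|v| \ge \log N}$ of a bounded-by-$N$ integrand is $\ll 1$ — which forces the decay exponent $A$ to satisfy $N(\log N)^{1-A} \ll 1$, trivially true for $A \ge 2$ once $N$ is large. There is a minor subtlety that the frequencies $\log n$ span an interval of length $\log 2$, not length $1$, so one genuinely does need the initial modulation $F(t) \mapsto e^{it c}F(t)$ with $c = \log(N\sqrt 2)$ to center the band inside $[-1,1]$; but this is exactly why the hypothesis is stated with the innocuous $N < n \le 2N$ (a dyadic block) rather than a longer range, and it costs nothing since $|F|$ is modulation-invariant. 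I expect the whole argument to occupy under half a page.
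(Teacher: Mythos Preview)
Your argument has a genuine gap at the tail estimate. You claim that the outer part of the convolution contributes
\[
\ll N\int_{|v|\ge \log N}|v|^{-A}\,\mathrm{d}v \ll N(\log N)^{1-A} \ll 1 \quad \text{for $A$ large},
\]
but this is false: for every \emph{fixed} $A$ one has $N(\log N)^{1-A}\to\infty$ as $N\to\infty$, since $N=e^{\log N}$ grows faster than any power of $\log N$. Nor can one let $A$ depend on $N$: the Schwartz bound $|\check\Phi(v)|\le C_A|v|^{-A}$ carries constants $C_A$ that necessarily grow at least like $(A!)^{s}$ for some $s>1$, because a nonzero function whose Fourier transform is compactly supported cannot decay exponentially on the real line (its Fourier transform would then extend analytically and hence vanish identically). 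Optimizing over $A$ therefore yields at best a factor $e^{-c(\log N)^{1/s}}$ with $1/s<1$, which still loses against $N=e^{\log N}$. In particular, your ``slightly stronger'' conclusion without the $\log N$ factor is not established; that factor is not slack but the price of making the tail integrable.

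The paper repairs exactly this point. One takes $\psi$ with $\hat\psi\equiv 1$ on $[1,2]$ and $|\psi(x)|\ll e^{-|x|^{2/3}}$ (the existence of such $\psi$ is a Denjoy--Carleman type fact), and then \emph{rescales} to $\psi_{(\log N)^{-1}}(v)=(\log N)\,\psi(v\log N)$, so that $\hat\psi_{(\log N)^{-1}}(\log n)=\hat\psi(\log n/\log N)=1$ for $N<n\le 2N$ and one still has an exact reproducing identity $F(t)=\int F(t+v)\,\psi_{(\log N)^{-1}}(v)\,\mathrm{d}v$. On $|v|<\log N$ the kernel is bounded by $\log N$, which produces the $\log N$ in the statement. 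On $|v|\ge\log N$ the kernel argument satisfies $|v|\log N\ge(\log N)^{2}$, so the sub-exponential decay gives a factor $e^{-(\log N)^{4/3}}$, and now $N\cdot e^{-(\log N)^{4/3}}\to 0$. The rescaling, not merely a sharper fixed kernel, is what makes the tail beat the trivial bound $|F|\le N$.
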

\begin{proof} Let $\psi$ be a smooth function on $\mathbb{R}$ such that $\hat{\psi}$, the Fourier transform of $\psi$, is identically $1$ on the interval $[1,2]$ and satisfies 

\begin{equation*} 
|\psi (x)| \ll  e^{-|x|^{2/3}}. 
\end{equation*}
The existence of such a function $\psi$ is guaranteed by the Denjoy-Carleman theorem.

Let $\psi_{\lambda}(x)=(1/\lambda)\psi (x/ \lambda)$. We have, for $N\geq 2$,  
\begin{align*} 
	|F(t)| =& \left| \sum_{N < n \leq 2N} b_{n} \hat{\psi} \left( \frac{\log n}{\log N}\right) n^{-it} \right|
	= \left| \sum_{N < n \leq 2N} b_{n}  \hat{\psi}_{(\log N)^{-1}} \left( \log n \right) n^{-it} \right|
	\\
	=& \left|  \int_{\mathbb{R}} F(t+v) \psi_{(\log N)^{-1}}(v) \:\mathrm{d}v  \right|.
\end{align*}

The result now follows upon realizing that $|\psi_{(\log N)^{-1}}(v)| \ll \log N$  if   $|v| < \log N$ and
\begin{equation*} 
	 \int_{|v| \geq \log N} \left|  F(t+v) \psi_{(\log N)^{-1}}(v) \right|  \:\mathrm{d}v
	   \ll N \log N\int_{|v| \geq \log N} e^{-(|v| \log N)^{2/3}}  \:\mathrm{d}v
	 \ll 1, 
\end{equation*}
as $N \rightarrow \infty$ because $|b_n| \leq 1$.
\end{proof}
The next one is a simple estimate due to Bourgain \cite[Lemma 3.4]{Bourgain2000} for Dirichlet polynomials over difference sets where the index sets are different.
 \begin{lemma} \label{BJ} Let $a_{n},b_{n}$  $(1 \leq n \leq N)$ be complex numbers such that $|a_{n}| \leq  b_{n}$. 
 Let $R, S \subseteq \mathbb{R}$ be two finite sets. Then 
\begin{equation*} 
\sum_{\substack{t \in R \\s \in S}} \left| \sum_{n=1}^{N} a_{n} n^{i(t-s)}\right|^{2} 
\leq  \left(\sum_{t, t' \in R} \left| \sum_{n=1}^{N} b_{n} n^{i(t-t')}\right|^{2} \right)^{\frac{1}{2}} \left(\sum_{s, s' \in S} \left| \sum_{n=1}^{N} b_{n} n^{i(s-s')}\right|^{2} \right)^{\frac{1}{2}} .
\end{equation*}
\end{lemma}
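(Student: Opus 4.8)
The plan is to prove the estimate by squaring out the left-hand side, carrying out the summation over $R\times S$ first, and then applying the Cauchy--Schwarz inequality in the pairs $(m,n)$ of summation indices rather than in $R\times S$. Writing $n^{i(t-s)}=m^{it}n^{-it}$-type factors and expanding, one has, since $m/n>0$ and hence $(m/n)^{-is}=\overline{(m/n)^{is}}$,
\[
\sum_{\substack{t\in R\\ s\in S}}\left|\sum_{n=1}^{N} a_n n^{i(t-s)}\right|^2
=\sum_{m,n=1}^{N} a_m\overline{a_n}\left(\sum_{t\in R}(m/n)^{it}\right)\left(\sum_{s\in S}(m/n)^{-is}\right)
=\sum_{m,n=1}^{N} a_m\overline{a_n}\, g_R(\theta_{mn})\,\overline{g_S(\theta_{mn})},
\]
where $g_E(\theta):=\sum_{u\in E}e^{iu\theta}$ for a finite set $E\subseteq\mathbb{R}$ and $\theta_{mn}:=\log(m/n)$; here we used $\sum_{s\in S}e^{-is\theta}=\overline{g_S(\theta)}$.

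Next I would use $|a_m\overline{a_n}|\le b_m b_n$ and apply the Cauchy--Schwarz inequality to the double sum over $(m,n)$ with the nonnegative weights $b_m b_n$, splitting the product $|g_R(\theta_{mn})|\,|g_S(\theta_{mn})|$:
\[
\left|\sum_{m,n=1}^{N} a_m\overline{a_n}\, g_R(\theta_{mn})\,\overline{g_S(\theta_{mn})}\right|
\le\left(\sum_{m,n=1}^{N} b_m b_n\,|g_R(\theta_{mn})|^2\right)^{1/2}\left(\sum_{m,n=1}^{N} b_m b_n\,|g_S(\theta_{mn})|^2\right)^{1/2}.
\]
Finally, I would reverse the expansion of the first step: since the $b_n$ are real, $|g_R(\theta_{mn})|^2=\sum_{t,t'\in R}m^{i(t-t')}n^{-i(t-t')}$, so
\[
\sum_{m,n=1}^{N} b_m b_n\,|g_R(\theta_{mn})|^2=\sum_{t,t'\in R}\left(\sum_m b_m m^{i(t-t')}\right)\overline{\left(\sum_n b_n n^{i(t-t')}\right)}=\sum_{t,t'\in R}\left|\sum_{n=1}^{N} b_n n^{i(t-t')}\right|^2,
\]
and likewise with $S$ in place of $R$. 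Substituting these two identities into the Cauchy--Schwarz bound, and noting that the left-hand side of the lemma is nonnegative and therefore equals its own absolute value, gives exactly the claimed inequality.

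There is no serious obstacle here: the argument is essentially bookkeeping. The only points requiring a little care are keeping track of the complex conjugations --- which close up cleanly precisely because the ratios $m/n$ are positive reals and the $b_n$ are nonnegative --- and recognising at the outset that the efficient manoeuvre is to apply Cauchy--Schwarz over the index pairs $(m,n)$ with weights $b_m b_n$, after which both resulting factors collapse back into diagonal sums of exactly the shape appearing on the right-hand side.
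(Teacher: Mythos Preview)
Your proof is correct. The paper does not actually supply a proof of this lemma---it merely cites Bourgain \cite[Lemma 3.4]{Bourgain2000}---but your argument (expand the square, swap the order of summation, bound $|a_m\overline{a_n}|\le b_m b_n$, apply Cauchy--Schwarz in the $(m,n)$ variables with weights $b_mb_n$, and collapse back) is exactly the standard route and is complete as written.
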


The final lemma is Heath-Brown's estimate on double zeta sums \cite[Thm.  1]{HBLV} (cf. \cite[Lemma 3.7]{Bourgain2000}). It is much deeper and is a crucial ingredient for our argument.  
\begin{lemma} \label{Headou} Let $R$ be a finite set of well-spaced, cf. \eqref{disze}, points such that $|t| \leq T$ for each $t\in R$. 
Then
\begin{equation*} 
\sum_{t,t'\in R} \left| \sum_{N < n \leq 2N} n^{i(t-t')}\right|^{2} \ll T^{\varepsilon}(|R|N^{2}+ |R|^{2}N+N|R|^{5/4}T^{1/2}).
\end{equation*}
\end{lemma}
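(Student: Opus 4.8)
The statement is quoted from Heath-Brown \cite{HBLV}, so the plan is to indicate how it is proved rather than to reprove it from scratch; nevertheless, let me sketch the route I would follow. The left-hand side is a quadratic form $\sum_{t,t'\in R} |D(t-t')|^2$ where $D(u) = \sum_{N<n\le 2N} n^{iu}$. Expanding the square and opening the inner sum, one gets a double sum over $m,n \in (N,2N]$ of $(m/n)^{i(t-t')}$, i.e.
\begin{equation*}
\sum_{t,t'\in R}|D(t-t')|^2 = \sum_{m,n}\Bigl|\sum_{t\in R}(m/n)^{it}\Bigr|^2.
\end{equation*}
The diagonal $m=n$ contributes exactly $|R|^2 N$ (there are $\sim N$ diagonal terms and the inner sum has modulus $|R|$), which accounts for the middle term $|R|^2 N$. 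For the off-diagonal, I would first isolate the terms where $m/n$ is close to $1$, say $|m/n - 1|$ small, and handle the remaining ``well-separated'' frequencies by a large-sieve-type / mean-value argument for the exponential sum $\sum_{t\in R} e^{it\log(m/n)}$, using that the $t$ are well-spaced so that the dual sieve gives $\sum_t |\sum_R \cdots|^2 \ll (\text{length} + T)(\cdots)$. The term $|R|N^2$ comes from the trivial bound $|\sum_R (m/n)^{it}|\le |R|$ applied after the frequencies have been split into $O(1)$ (up to $T^\varepsilon$) ranges.

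The genuinely deep part — and the main obstacle — is the term $N|R|^{5/4}T^{1/2}$, which cannot be obtained by elementary mean-value or large-sieve inequalities alone. This is where Heath-Brown's actual argument enters: after the expansion, one is led to counting solutions of a Diophantine-type inequality $\|(t-t')\log(m/n)\| < \eta$ (equivalently, estimating $\Delta(\ell)$-type quantities weighted by zeta-like sums), and the sharp bound requires his spacing/divisor-function input together with a careful treatment of the resulting three-dimensional lattice-point problem. The exponent $5/4 = 1 + 1/4$ and the factor $T^{1/2}$ reflect the strength of that estimate over what Cauchy–Schwarz plus the classical large sieve would give. Since this is Theorem 1 of \cite{HBLV}, I would simply invoke it, noting only that the $T^\varepsilon$ absorbs the logarithmic factors from the dyadic/divisor bookkeeping and that the well-spacing hypothesis \eqref{disze} is exactly what is needed to control the near-diagonal frequencies; no further work is required here.
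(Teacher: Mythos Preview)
The paper does not prove this lemma; it simply cites it as \cite[Thm.~1]{HBLV}, which is exactly what you propose to do. Your heuristic sketch goes beyond what the paper offers and is broadly accurate, though the $|R|N^{2}$ contribution is more naturally identified with the $t=t'$ diagonal (there are $|R|$ such terms, each of size $|D(0)|^{2}\asymp N^{2}$) rather than with a trivial bound on $\bigl|\sum_{t\in R}(m/n)^{it}\bigr|$.
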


\subsection{The dichotomy}\label{The Dich}
\

Let $T_{0}$ and $R_{\alpha}$ be as in Section \ref{Ivices}, see \eqref{Hdi}. We recall that we have set 
\begin{equation*} 
	\Delta_{\alpha} (\ell) = \Delta_{R_{\alpha}}(\ell) = \# \{ (\beta+it, \beta'+it') \in R_{\alpha} \times R_{\alpha}  :     |t-t'-\ell| <1 \}.
\end{equation*}
Let $0< \delta_{1} < 1$ be a parameter to be optimized later. We set
$$\zeta_0 = \zeta_{0,T_0, H} = \zeta \chi_{\{s : |\zeta(s)|\geq H(T)\}}, \ \  \zeta_1 = \zeta_{1,T_0,H} =\zeta \chi_{\{s : 1\leq|\zeta(s)|< H(T)\}}, $$ and let $B_0, B_1, q_0, q_1$ be the parameters that were introduced in the mixed moment estimates \eqref{mixmo}. Note that the definition for $\zeta_1$ is slightly different than in the previous section because of the lower bound $|\zeta| \ge 1$. For each fixed $\alpha$, we distinguish between the following alternatives.
\vspace{3px}
\\
\textbf{Case 1.} We have
\begin{equation} \label{Dich1}
    \sum_{\ell \in \mathbb {Z}} \Delta_{\alpha} (\ell) \int_{|v| < T^{\varepsilon}} \left|\zeta_{0}\left(\frac{1}{2} +i\ell+iv\right)\right|  \: \mathrm{d}v 
	\leq  \delta_{1}^{2/q_{0}}T_{0}^{B_{0}/q_{0}} |R_{\alpha}|^{2-1/q_{0}} 
\end{equation}
and
\begin{equation} \label{Dich2}
	\sum_{\ell \in \mathbb {Z}} \Delta_{\alpha} (\ell) \int_{|v| < T^{\varepsilon}} \left|\zeta_{1}\left(\frac{1}{2} +i\ell+iv\right)\right|  \: \mathrm{d}v 
	\leq  \delta_{1}^{2/q_{1}}T_{0}^{B_{1}/q_{1}} |R_{\alpha}|^{2-1/q_{1}} .
\end{equation}
\vspace{3px}
\\
\textbf{Case 2.} Either \eqref{Dich1} or \eqref{Dich2} fails.
\vspace{1px}
\newline

We consider a collection of $\lceil2/\delta_{2}\rceil$ sets $R_\alpha$ that cover $R$. We let $\mathcal{I}_0$ be the index set of $\alpha$ for which \eqref{Dich1} fails, $\mathcal{I}_1$ be the index set for which \eqref{Dich2} fails, and $\mathcal{I}_2$ be the index set for which both inequalities hold. Clearly $|R| \ll \sum_{\alpha \in \mathcal{I}_0} R_\alpha + \sum_{\alpha \in \mathcal{I}_1} R_\alpha + \sum_{\alpha \in \mathcal{I}_2} R_\alpha $. An additional constraint on the parameter $\delta_1$ will arise below in the analysis of Case 2.

\subsection{The Case 1 contribution} \label{caseIcon}

\

We suppose here that $|R| \ll \sum_{\alpha \in \mathcal{I}_2} R_\alpha$. Let $\alpha \in \mathcal{I}_2$. Since  $|\zeta| = |\zeta_{0}| + |\zeta_{1}| + |\zeta\chi_{\{s : |\zeta(s)| < 1\}}|$, it follows that
\begin{align*}
	 \sum_{\ell \in \mathbb {Z}} \Delta_{\alpha} (\ell) \int_{|v| < T^{\varepsilon}} \left|\zeta\left(\frac{1}{2} +i\ell+iv\right)\right|  \: \mathrm{d}v 
	& \leq  \delta_{1}^{2/q_{0}}T_{0}^{B_{0}/q_{0}} |R_{\alpha}|^{2-1/q_{0}} \\
	& \ \ \ + \delta_{1}^{2/q_{1}}T_{0}^{B_{1}/q_{1}} |R_{\alpha}|^{2-1/q_{1}} + |R_\alpha|^2 T^{\varepsilon},
\end{align*}
where the last term is coming from the contribution of $|\zeta| < 1$. Inserting this estimate in 
 \eqref{conreIv} and rearranging $|R_\alpha|$ gives\footnote{Note that the term $|R_\alpha|^2$ can never be dominant if $\sigma > 3/4$, as \eqref{conreIv} would then entail $|R_\alpha| \ll N^{3/4 - \sigma} |R_\alpha| T^{\varepsilon}$, which is impossible in view of $N \geq T^{1/2}$.}, for $\sigma > 3/4$, 
 \begin{align*} 
 	|R_{\alpha}| \ll T^{\varepsilon}\left(N^{2-2\sigma} +\delta_{1}^{2}T_{0}^{B_0}N^{(3-4\sigma)q_0/2} + \delta_{1}^{2}T_{0}^{B_1}N^{(3-4\sigma)q_1/2}\right).
 \end{align*}
Replacing $T_0$ by $\delta_2 T$ and summing over the index set $\mathcal{I}_2$ then yields
 \begin{align} \label{Dichcon4}
 	|R| \ll T^{\varepsilon}\left(\delta_{2}^{-1}N^{2-2\sigma} +\delta_{1}^{2}\delta_{2}^{B_{0}-1}T^{B_{0}}N^{(3-4\sigma)q_{0}/2} + \delta_{1}^{2}\delta_{2}^{B_{1}-1}T^{B_{1}}N^{(3-4\sigma)q_{1}/2}\right).
 \end{align}
\subsection{Analysis of Case 2}\label{AnacaseII}

\

In this section we consider the case when 
\begin{equation} \label{case1fail}
	|R| \ll T^{\varepsilon} \sum_{\alpha \in \mathcal{I}_0} |R_{\alpha}|.
\end{equation}
The analysis of the case when $|R| \ll \sum_{\alpha \in \mathcal{I}_1} |R_{\alpha}|$ is analogous. We have incorporated the extra factor $T^{\varepsilon}$ in \eqref{case1fail} as in some places of the analysis we shall add extra restrictions on the set $\mathcal{I}_0$ and the extra factor $T^{\varepsilon}$ shall guarantee that \eqref{case1fail} remains valid under these restrictions. We write for simplicity $q$ and $B$ instead of $q_{0}$ and $B_{0}$.

First, we translate the failure of \eqref{Dich1} and the dominance of the index set $\mathcal{I}_0$ into a lower bound for the size of a specific multiset of representative well-spaced class-I zeros. In this part we shall perform numerous dyadic decompositions and exploit the mixed moment estimate \eqref{mixmo}. Afterwards we apply the analysis of Section \ref{zedete} to find an upper estimate for this multiset in terms of a Dirichlet polynomial which will subsequently be estimated with the technology provided by Lemma \ref{Headou}. The compatibility of this upper and lower estimate shall then result in an improved estimate on $|R|$.

For $0 < \delta' < 1$ we define the set
\begin{equation*}
	D_{\alpha}(\delta') = \{\ell : \delta' |R_{\alpha}| <  \Delta_{\alpha} (\ell) \leq 2 \delta' |R_{\alpha}| \}.
\end{equation*}
As $\sum_{\ell} \Delta_\alpha(\ell) \leq 2|R_\alpha|^2$ we immediately obtain $|D_{\alpha}(\delta')| \leq 2|R_\alpha|/\delta'$. Furthermore, as $\Delta_{\alpha}(\ell)$ is an integer and $|R_\alpha| \leq T_0$ because the points of $R_\alpha$ are well-spaced and $I_\alpha$ has length at most $T_0$, we may through a dyadic argument find $\delta' \in \{2^{-k} | 1\leq k \ll \log T_0\}$  such that
\begin{align*} 
	\sum_{\ell \in \mathbb {Z}} \Delta_{\alpha} (\ell) \int_{|v| < T^{\varepsilon}} \left|\zeta_{0}\left(\frac{1}{2} +i\ell+iv\right)\right|  \: \mathrm{d}v & 	\ll T_{0}^{\varepsilon} \delta' |R_{\alpha}| \sum_{\ell \in D_{\alpha}(\delta')} \int_{|v| < T^{\varepsilon}} \left|\zeta_{0}\left(\frac{1}{2} +i\ell+iv\right)\right|  \: \mathrm{d}v.
\end{align*}
A priori $\delta'$ does depend on $\alpha$, but as there are only $O(\log T)$ possibilities for $\delta'$, the pigeonhole principle asserts that one may select a subset of $\mathcal{I}_0$, which we shall continue to write as $\mathcal{I}_0$, for which the above expression 
holds for a single $\delta'$ and where \eqref{case1fail} remains valid, possibly with a different value for $\varepsilon$. In conclusion, the parameter $\delta'$ can be chosen independent of $\alpha$.  

Exploiting now that \eqref{Dich1} fails, we obtain
\begin{equation*}
	  \sum_{\ell \in D_{\alpha}(\delta')} \int_{|v| < T^{\varepsilon}} \left|\zeta_{0}\left(\frac{1}{2} +i\ell+iv\right)\right|  \: \mathrm{d}v \geq \delta_{1}^{2/q}(2\delta')^{-1}T_{0}^{B/q-\varepsilon} |R_{\alpha}|^{1-1/q}.
\end{equation*}

Next we proceed to narrow the range for the modulus of $\zeta_0$. For $H > 0$, we consider level sets 
\begin{equation*}
	S_{H,T_0} = S_{H, T_{0}, \varepsilon} = \left\{|t| \leq T_{0}+T^{\varepsilon}+1  :  H < \left|\zeta_{0}\left(\frac{1}{2} +it\right)\right| \leq  2H \right\}.
\end{equation*}
As one can cover the support of $\zeta_0(1/2+it)\chi_{ [-T_{0}-T^{\varepsilon}-1, T_{0}+T^{\varepsilon}+1]}(t)$  by as many as $O(\log T_0)$ level sets of the form $S_{H,T_0}$ in view\footnote{The last inequality $|\zeta_0(1/2+ it)| \ll T_0^{1/4}$ follows from the trivial convexity bound for $\zeta$. Of course there are better estimates available. Here is also why we invoked the additional bound $|\zeta_1| \geq 1$ in the definition of $\zeta_1$. This additional restriction guarantees that also the support of $\zeta_1$ can be covered by $O(\log T_0)$ level sets of the form $S_{H,T_0}$, which is unclear otherwise.} of $1 \leq |\zeta_0(1/2 +it)| \leq T_0^{1/4}$ (on the support of $\zeta_0$), one can find through another dyadic argument a number $H$  such that 
\begin{align*}
		\sum_{\ell \in D_{\alpha}(\delta')} \int_{|v| < T^{\varepsilon}} \left|\zeta_{0}\left(\frac{1}{2} +i\ell+iv\right)\right|  \: \mathrm{d}v & = \sum_{\ell \in D_{\alpha}(\delta')} \int_{-T_{0}-T^{\varepsilon}-1}^{T_{0}+T^{\varepsilon}+1} \left|\zeta_{0}\left(\frac{1}{2} +it\right)\right| \chi_{[\ell-T^\varepsilon, \ell+T^{\varepsilon}]}(t) \mathrm{d}t \\
		& \ll \sum_{\ell \in D_{\alpha}(\delta')} H \log T_0  \int_{-T_{0}-T^{\varepsilon}-1}^{T_{0}+T^{\varepsilon}+1}  \chi_{[\ell-T^\varepsilon, \ell+T^{\varepsilon}] \cap S_{H,T_0}}(t)\mathrm{d}t \\
		& \ll T^{\varepsilon} H \int_{|u| \leq  T^\varepsilon}  \sum_{\ell \in D_{\alpha}(\delta')} \chi_{S_{H,T_0}}(\ell + u) \mathrm{d}u\\
		& = T^{\varepsilon} H \int_{|u| \leq  T^\varepsilon} |(D_\alpha(\delta') +u) \cap  S_{H,T_0}| \mathrm{d}u.
\end{align*}
We emphasize here that $|(D_\alpha(\delta') +u) \cap  S_{H,T_0}|$ denotes the cardinality of the set (depending on the variable $u$). Again $H$ a priori depends on $\alpha$, but as there are only $O(\log T_0)$ valid choices for $H$, one may select as before a subset of $\mathcal{I}_0$, which we keep denoting as $\mathcal{I}_0$, for which \eqref{case1fail} remains true. Therefore, we may assume without loss of generality that $H$ is independent of $\alpha$.

Consider
\begin{align*}
	W_{\alpha}&:= \int_{|u| \leq  T^\varepsilon} |(D_\alpha(\delta') +u) \cap  S_{H,T_0}| \mathrm{d}u 
	\\
	&= \left(\int_{|u| \leq  T^\varepsilon} |(D_\alpha(\delta') +u) \cap  S_{H,T_0}| \mathrm{d}u\right)^{1-1/q} \left(\int_{S_{H, T_{0}} } \sum_{\ell \in D_{\alpha}(\delta' ) } \chi_{(\ell-T^{\varepsilon}, \ell+T^{\varepsilon})} (t)   \:\mathrm{d} t\right)^{1/q}\\
	& \ll  T^{\varepsilon}  |D_{\alpha}(\delta')|^{1-1/q}  (\mathfrak{m}(S_{H, T_{0}}))^{1/q},
\end{align*}
where $\mathfrak{m}$ stands for the Lebesgue measure and where we have used the trivial estimate $\sum_{\ell \in D_{\alpha}(\delta') } \chi_{(\ell-T^{\varepsilon}, \ell+T^{\varepsilon})} (t) \ll T^{\varepsilon}.$
Hence, via a dyadic argument we can specify $0< \delta'' \ll T^{\varepsilon}$ such that
\begin{equation}\label{del''}
	\delta'' |D_{\alpha}(\delta')|^{1-1/q}  \left(\mathfrak{m}(S_{H, T_{0}})\right)^{1/q} < W_\alpha
	\leq 2\delta'' |D_{\alpha}(\delta')|^{1-1/q}  \left(\mathfrak{m}(S_{H, T_{0}})\right)^{1/q}.
\end{equation}
Again $\delta''$ can be taken independent of $\alpha$ by an appropriate restriction of the index set $\mathcal{I}_0$ and a constraint on the parameter $\delta_{1}$; in fact, we shall require\footnote{We have just derived that $W_\alpha \ll T^{\varepsilon}  |D_{\alpha}(\delta')|^{1-1/q}  \left(\mathfrak{m}(S_{H, T_{0}})\right)^{1/q} \ll T^{c_2}$ for some $c_2 > 0$. Going back through the inequalities we also have the lower bound $W_\alpha \gg  T^{-\varepsilon} H^{-1} \delta_1^{2/q} \gg T^{-c_{1}}$ for some $c_1 > 0$ as $\log H \asymp \log T_0$ and because $\delta_1$ shall later be picked in such a way that $\delta_1 \gg T^{-c}$. Therefore, a dyadic covering for $W_\alpha$ only requires $O(\log T)$ intervals and this enables one to pick a restriction of $\mathcal{I}_0$ such that \eqref{case1fail} remains intact.} from now on that $\delta_1 \gg T^{-c}$ for some $c>0$.

Combining all the above inequalities gives
$$
T^{\varepsilon}\delta''H|D_{\alpha}(\delta')|^{1-1/q}  \left(\mathfrak{m}(S_{H, T_{0}})\right)^{1/q} \gg \frac{\delta_{1}^{2/q}}{\delta'}T_{0}^{B/q} |R_{\alpha}|^{1-1/q},
$$
whence
 \begin{equation*} 
 	| D_{\alpha}(\delta') |  \gg T^{-\varepsilon}\left(\frac{\delta_{1}^{2/q}}{\delta'\delta''}\right)^{\frac{q}{q-1}} |R_{\alpha}|
 \end{equation*}
 follows because the mixed moment estimate $\eqref{mixmo}$ implies $\mathfrak{m}(S_{H,T_0}) \ll H^{-q}T_0^{B}T^{\varepsilon}$.
Thus, using $|D_\alpha(\delta')| \ll |R_\alpha|/\delta'$, we obtain
\begin{align} \label{lowdel'}
	\delta'  \gg T^{-\varepsilon}\frac{\delta_{1}^{2}}{(\delta'')^{q}}.
\end{align}

We can derive another lower bound on $|D_\alpha(\delta')|$. Namely, the trivial bound $W_\alpha \ll T^{\varepsilon}|D_\alpha(\delta')|$ and $W_\alpha \asymp \delta'' |D_{\alpha}(\delta')|^{1-1/q}  \left(\mathfrak{m}(S_{H, T_{0}})\right)^{1/q}$ yield
\begin{equation*}
	| D_{\alpha}(\delta') | \gg T^{-\varepsilon}(\delta'')^{q}\: \mathfrak{m}(S_{H, T_{0}})+ T^{-\varepsilon}\left(\frac{\delta_{1}^{2/q}}{\delta'\delta''}\right)^{\frac{q}{q-1}} |R_{\alpha}|.
\end{equation*}
Together with \eqref{del''} and \eqref{case1fail} this implies 
\begin{align*} 	 \sum_{\alpha \in \mathcal{I}_0} |R_{\alpha}|W_{\alpha}& \gg \sum_{\alpha \in \mathcal{I}_0} |R_\alpha| \delta'' |D_\alpha(\delta')|^{1-1/q} \left(\mathfrak{m}(S_{H,T_0})\right)^{1/q} 
	\\
	 & \gg  T^{-\varepsilon} (\delta'')^{q} \mathfrak{m}(S_{H, T_{0}})\sum_{\alpha \in \mathcal{I}_0} |R_{\alpha}|  
	 + T^{-\varepsilon}\frac{\delta_{1}^{2/q}}{\delta'}  \left(\mathfrak{m}(S_{H, T_{0}})\right)^{1/q} \sum_{\alpha \in \mathcal{I}_0} |R_{\alpha}|^{\frac{2q-1}{q}}  
	 \\ 
	  & \gg  T^{-\varepsilon} (\delta'')^{q}|R| \int_{|u|< T^{\varepsilon}}|(S_{H, T_{0}}- u) \cap \mathbb{Z} |   \:\mathrm{d}u
	 \\
	 &\ \ \ \ \ \ \ \ \ \ \ \ +T^{-\varepsilon}\frac{\delta_{1}^{2/q}\delta_{2}^{1-1/q}}{\delta'}|R|^{\frac{2q-1}{q}}\int_{|u|< T^{\varepsilon}}|(S_{H, T_{0}}- u) \cap \mathbb{Z} | ^{1/q}  \:\mathrm{d}u
\end{align*}
as H\"older's inequality implies $|R|^{(2q-1)/q} \ll \delta_2^{-(q-1)/q} \sum_{\alpha \in \mathcal{I}_0} |R_{\alpha}|^{(2q-1)/q}$ and
$$\int_{|u|< T^{\varepsilon}}|(S_{H, T_{0}}- u) \cap \mathbb{Z} | ^{1/q}  \:\mathrm{d}u \ll \left(\int_{|u|< T^{\varepsilon}}|(S_{H, T_{0}}- u) \cap \mathbb{Z} |\mathrm{d}u\right) ^{1/q} T^{\varepsilon} \ll T^{\varepsilon}\left(\mathfrak{m}(S_{H,T_0})\right)^{1/q}.$$

Recalling the definition of $W_\alpha$, we may therefore find $|u | < T^{\varepsilon}$ such that the set of integers
\begin{equation*} 
	S = (S_{H, T_{0}}- u) \cap \mathbb{Z}
\end{equation*}
satisfies
\begin{align} \label{proDSf}
	 \sum_{\alpha \in \mathcal{I}_0} |R_{\alpha}| | D_{\alpha}(\delta')  \cap S| 
	\gg T^{-\varepsilon} (\delta'')^{q}|R| |S|  
	 +T^{-\varepsilon}\frac{\delta_{1}^{2/q}\delta_{2}^{1-1/q}}{\delta'}|R|^{\frac{2q-1}{q}}|S|^{1/q}.
\end{align}

We keep in mind that we have to multiply by $\delta' \asymp \Delta_\alpha(\ell)/|R_\alpha|$ with $\ell \in D_{\alpha}(\delta')$ to eliminate $\delta'$ and $\delta''$ from the right-hand side by virtue of \eqref{lowdel'}. Now that we have established a lower bound for a multiset of class-I zeros, we shift our attention to an upper bound.

We now select $A$ to be the multiset 
$$ A = \bigcup_{\alpha \in \mathcal{I}_0} \bigcup_{\ell \in S} \bigcup_{t \in \{\Im \rho \mid  \rho \in R_\alpha\}} \left\{\rho = \beta + it' \in R_\alpha , |t' - (t-\ell)| < 1\right\},
$$
where the multiplicity of a zero $\rho$ is according to how many triples $(\alpha,\ell,t)$ produce $\rho$. Therefore, $|A| = \sum_{\alpha \in \mathcal{I}_0} \sum_{\ell \in S} \Delta_{\alpha}(\ell)$. We now apply the machinery from Section \ref{zedete}, in particular \eqref{uppAF}, to find a Dirichlet polynomial $F_{A}(t) = \sum_{N(A) < n \leq 2N(A)} b_n n^{-it}$ with bounded coefficients $b_n$ such that
\begin{align*}
 & N^{2\sigma} \delta' \sum_{\alpha \in \mathcal{I}_0} |R_{\alpha}| |S \cap D_{\alpha}(\delta')|  \ll N(A)^{2\sigma} \sum_{\alpha \in \mathcal{I}_0} \sum_{\ell \in S}  \Delta_{\alpha} (\ell) \\
 &  \ \ \ \ \ \ \ \ \ \ \ll T^{\varepsilon} \sum_{\alpha \in \mathcal{I}_0} \sum_{\ell \in S}  \sum_{t \in \{ \Im \rho : \rho \in R_{\alpha}\} } \sum_{ t'\in \{ \Im \rho : \rho \in R_{\alpha}\} \atop |t'-(t-\ell)| < 1}\left| F_{A}(t')\right|^{2} \\
 & \ \ \ \ \ \ \ \ \ \ \ll T^{\varepsilon} \sum_{\alpha \in \mathcal{I}_0} \sum_{\ell \in S}  \sum_{t \in \{ \Im \rho : \rho \in R_{\alpha}\} } \sum_{ t'\in \{ \Im \rho : \rho \in R_{\alpha}\} \atop |t'-(t-\ell)| < 1}\left(1 + \int_{|v|<\log N(A)} |F_{A}(t' + v)|^{2}\:\mathrm{d}v\right) \\
 & \ \ \ \ \ \ \ \ \ \ \ll T^{\varepsilon}\sum_{\alpha \in \mathcal{I}_0} \sum_{t \in \{ \Im \rho : \rho \in R_{\alpha}\} } \sum_{\ell \in S}  \int_{|v|<T^{\varepsilon}} |F_{A}(t -\ell + v)|^{2} \:\mathrm{d}v.
\end{align*}
In the penultimate transition we applied a Cauchy-Schwarz estimate on Lemma \ref{Bourgpoly} and in the last step we used that there can only be one $t'$ with a given $t$ as the zeros in $R_\alpha$ are well-spaced and that the term with $1$ may be dropped as it only delivers a contribution of at most $T^{\varepsilon} \sum_{\alpha \in \mathcal{I}_0} \sum_{\ell \in S}  \Delta_{\alpha} (\ell)$ which can never be dominant in view of the estimate on the first line.

The remaining sums and integral are estimated via Lemma \ref{BJ} and Lemma \ref{Headou}. This gives
\begin{align*}
 &N^{2\sigma} \delta' \sum_{\alpha \in \mathcal{I}_0} |R_{\alpha}| |S \cap D_{\alpha}(\delta')|  \ll T^\varepsilon \int_{|v|<T^{\varepsilon}} \sum_{\substack{\rho = \beta + it \in R \\ \ell \in S}} |F_{A}(t -\ell + v)|^{2} \:\mathrm{d}v\\
 &  \ \ \ \ \ \ll T^\varepsilon \left(\sum_{\substack{\rho = \beta + it \in R \\ \rho' = \beta' +it' \in R}} \left| \sum_{N(A) < n \leq 2N(A)}  n^{i(t-t')}\right|^{2} \right)^{1/2}  \left(\sum_{\ell, \ell' \in S} \left| \sum_{N(A) < n \leq 2N(A)} n^{i(\ell-\ell')}\right|^{2} \right)^{1/2}\\
 & \ \ \ \ \ \ll T^{\varepsilon} N \left(|R| N + |R|^{2} + |R|^{5/4} T^{1/2}\right)^{1/2}   \left(|S| N + |S|^{2} + |S|^{5/4} T_{0}^{1/2}\right)^{1/2}.
\end{align*}
If we now assume that
\begin{equation*}
	|R| \leq N,
\end{equation*}
and use the condition $N \geq T^{1/2}$, then we find
\begin{align*} 
	N^{2\sigma} \delta' \sum_{\alpha \in \mathcal{I}_0} |R_{\alpha}| |S \cap D_{\alpha}(\delta')| \ll T^{\varepsilon} N^{3/2}|R|^{5/8} \left(|S| N + |S|^{2} + |S|^{5/4} T_{0}^{1/2}\right)^{1/2}.
\end{align*}
Combining this with the lower bound \eqref{proDSf} and eliminating $\delta', \delta''$ through \eqref{lowdel'}, we arrive at
\begin{align*} 
	& N^{2\sigma} \delta_{1}^{2} |R| |S|  
	+ N^{2\sigma}\delta_{1}^{2/q}\delta_{2}^{1-1/q}|R|^{\frac{2q-1}{q}}|S|^{1/q}
	\\
	 & \ \ \ \ \ \  \ll (N^{2} |R|^{5/8}|S|^{1/2} + N^{3/2} |R|^{5/8}|S| + \delta_{2}^{1/4}N^{3/2}T^{1/4} |R|^{5/8}|S|^{5/8})T^\varepsilon .
\end{align*}
One of the three terms on the right is dominant. We now wish to eliminate $|S|$. This can be done as the exponent of $|S|$ for each term on the right-hand side lies between $1/q$ and $1$; note that $q \geq 2$. Therefore, for each term on the right-hand side, $|S|$ can be eliminated by an appropriate interpolation of the two left side terms. After a few calculations one obtains
\begin{align} \label{termcase2}
	|R| \ll T^{\varepsilon} \left(\delta_{1}^{-\frac{8}{7}} \delta_{2}^{-\frac{4}{7}}N^{\frac{16(1-\sigma)}{7}} + \delta_{1}^{-\frac{16}{3}}N^{\frac{4(3-4\sigma)}{3}}+ \delta_{1}^{-\frac{5}{3}}\delta_{2}^{-\frac{1}{6}}N^{\frac{2(3-4\sigma)}{3}}T^{1/3}\right).
\end{align}

\subsection{A large value estimate}\label{LVE}
\

Collecting the contributions \eqref{Dichcon4} and \eqref{termcase2} from Cases 1 and 2, we get the following bound for $|R|$:
\begin{lemma}\label{LVEH} Let $N \geq T^{1/2}$ and $R$ be a set of representative well-spaced class-I zeros. 
If $B_0, B_1 > 0$ and $q_0,q_1 \geq 2$ are parameters for which \eqref{mixmo} holds and  
	$$
		|R| \leq N,
	$$
	 then for any $T^{-c}\ll \delta_{1}<1$ (for some $c>0$) and  $T^{-1} \leq \delta_{2} \leq 1$, 
	we have
	\begin{align*}
		|R| \ll \Big(\delta_{2}^{-1}N^{2-2\sigma} +\delta_{1}^{2}\delta_{2}^{B_{0}-1}T^{B_{0}}N^{(3-4\sigma)q_{0}/2} + \delta_{1}^{2}\delta_{2}^{B_{1}-1}T^{B_{1}}N^{(3-4\sigma)q_{1}/2}  
		\\
		\notag
		 + \ \delta_{1}^{-\frac{8}{7}} \delta_{2}^{-\frac{4}{7}}N^{\frac{16(1-\sigma)}{7}} + \delta_{1}^{-\frac{16}{3}}N^{\frac{4(3-4\sigma)}{3}}+ \delta_{1}^{-\frac{5}{3}}\delta_{2}^{-\frac{1}{6}}N^{\frac{2(3-4\sigma)}{3}}T^{1/3}\Big)T^\varepsilon.
	\end{align*}
\end{lemma}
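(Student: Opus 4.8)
The plan is to treat Lemma~\ref{LVEH} as the bookkeeping step that merges the two branches of the dichotomy set up in Section~\ref{The Dich}. Recall that the $\lceil 2/\delta_2\rceil$ subintervals $I_\alpha$ cover $[-T,T]$, so the sets $R_\alpha$ cover $R$ and hence $|R| \ll \sum_{\alpha\in\mathcal I_0}|R_\alpha| + \sum_{\alpha\in\mathcal I_1}|R_\alpha| + \sum_{\alpha\in\mathcal I_2}|R_\alpha|$. Consequently at least one of the three partial sums is $\gg |R|$, and it suffices to bound $|R|$ under each of these three dominance scenarios separately; the final estimate is then just the sum of the individual bounds, each of which already appears in the list displayed in the lemma.

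First I would dispose of the scenario $|R| \ll \sum_{\alpha\in\mathcal I_2}|R_\alpha|$, i.e.\ Case~1. Here every $\alpha$ in the dominant index set satisfies both \eqref{Dich1} and \eqref{Dich2}, so the computation of Section~\ref{caseIcon} applies verbatim: one writes $|\zeta|=|\zeta_0|+|\zeta_1|+|\zeta|\chi_{\{|\zeta|<1\}}$, feeds the resulting bound into \eqref{conreIv}, discards the harmless $|R_\alpha|^2 T^\varepsilon$ contribution (legitimate since we may assume $\sigma>3/4$, the only range in which the lemma is used, together with $N\ge T^{1/2}$), and sums the per-$\alpha$ estimate over $\mathcal I_2$ after replacing $T_0$ by $\delta_2 T$. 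This is exactly \eqref{Dichcon4}, which furnishes the first three terms $\delta_2^{-1}N^{2-2\sigma}$, $\delta_1^2\delta_2^{B_0-1}T^{B_0}N^{(3-4\sigma)q_0/2}$ and $\delta_1^2\delta_2^{B_1-1}T^{B_1}N^{(3-4\sigma)q_1/2}$.

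Next comes the scenario in which $\mathcal I_0$ (or, by the symmetric argument, $\mathcal I_1$) dominates, i.e.\ Case~2. Here I would simply invoke the chain of estimates built in Section~\ref{AnacaseII}: the failure of \eqref{Dich1}, combined with the successive dyadic pigeonholings of $\delta'$, $H$ and $\delta''$, produces the lower bound \eqref{proDSf} for $\sum_{\alpha\in\mathcal I_0}|R_\alpha|\,|D_\alpha(\delta')\cap S|$; the multiset $A$ together with the machinery of Section~\ref{zedete} (via \eqref{uppAF}, Lemma~\ref{Bourgpoly}, Lemma~\ref{BJ} and Lemma~\ref{Headou}, and using the hypotheses $|R|\le N$ and $N\ge T^{1/2}$) produces the matching upper bound; and comparing the two, eliminating $\delta',\delta''$ through \eqref{lowdel'} and $|S|$ through the interpolation between the two left-hand terms, yields \eqref{termcase2}. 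The point worth underlining is that this interpolation is tuned so that the powers of $q$ and $B$ cancel, which is why \eqref{termcase2} — and hence the last three terms $\delta_1^{-8/7}\delta_2^{-4/7}N^{16(1-\sigma)/7}$, $\delta_1^{-16/3}N^{4(3-4\sigma)/3}$ and $\delta_1^{-5/3}\delta_2^{-1/6}N^{2(3-4\sigma)/3}T^{1/3}$ — come out identical whether Case~2 is triggered through $(q_0,B_0)$ or through $(q_1,B_1)$. The constraint $T^{-c}\ll\delta_1<1$ is precisely what keeps all the dyadic coverings along this chain of length $O(\log T)$, so that the repeated pigeonhole restrictions of $\mathcal I_0$ cost only a factor $T^\varepsilon$ and \eqref{case1fail} survives.

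There is no genuinely deep step remaining in Lemma~\ref{LVEH} itself — all the substance lives in the preceding subsections — so the only thing one must be careful about is uniformity: checking that the three dominance scenarios are exhaustive, that the $T^\varepsilon$ factors accumulated from the various pigeonholings (and from $N\asymp N(A)$ when passing between $R$ and $A$) can all be absorbed into a single $T^\varepsilon$, and that the hypotheses $|R|\le N$, $N\ge T^{1/2}$, $T^{-c}\ll\delta_1<1$, $T^{-1}\le\delta_2\le1$ stated in the lemma are exactly those demanded by both branches. Granting this, one adds the bound from the active scenario to the (only larger) sum of all six displayed terms and obtains the asserted inequality.
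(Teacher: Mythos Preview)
Your proposal is correct and matches the paper's own argument essentially verbatim: the paper presents Lemma~\ref{LVEH} precisely as the collection of the Case~1 bound \eqref{Dichcon4} and the Case~2 bound \eqref{termcase2}, with all the substantive work already done in Sections~\ref{caseIcon} and \ref{AnacaseII}. Your added remarks about why the $(q,B)$-dependence cancels in \eqref{termcase2} and why the constraint $T^{-c}\ll\delta_1$ is needed are accurate elaborations of points the paper only mentions in passing.
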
 
\vspace{1px}

\subsection{Proof of Theorem \ref{main1}}\label{pth1}
\

From the analysis of Section \ref{zedete} it only remains to find an estimate for the representative well-spaced class-I zeros and from Section \ref{Ivices} we may suppose that \eqref{IvrelenN} holds and that $\sigma  < (3q^{\ast}-4)/(4q^{\ast}-4B^{\ast}-4)$. We also recall the restriction $\sigma \geq  1 - q_j/(4B_j + 4q_j -4)$, $j = 0,1$, we encountered in Section \ref{Ivices}. We consider first the case that $|R| \leq N$.
 Applying Lemma \ref{LVEH} with admissible parameters $q_0, q_1, B_0, B_1$ gives
\begin{align*}
	|R| \ll (\delta_{2}^{-1}N^{2-2\sigma} &+\delta_{1}^{2}\delta_2^{B_0 - 1}T^{B_0}N^{(3-4\sigma)q_0/2} + \delta_{1}^{2}\delta_{2}^{B_1}T^{B_1 - 1}N^{(3-4\sigma)q_1/2} 
	\\
	& + \delta_{1}^{-\frac{8}{7}} \delta_{2}^{-\frac{4}{7}}N^{\frac{16(1-\sigma)}{7}} + \delta_{1}^{-\frac{16}{3}}N^{\frac{4(3-4\sigma)}{3}}+ \delta_{1}^{-\frac{5}{3}}\delta_{2}^{-\frac{1}{6}}N^{\frac{2(3-4\sigma)}{3}}T^{1/3})T^{\varepsilon}.
\end{align*}
Let us choose for $T^{1/2} < N \leq T$ the parameters\footnote{If $T < N < T^{1+\varepsilon}$, we select the same parameters as were $N = T$, that is $\delta_1 = T^{((4q-4)\sigma-(3q+2B-4))/4}$ and $\delta_2 = 1$. The verification of the density hypothesis then becomes the same calculation as for $N = T$ except for some extra factors that can be absorbed in $T^{\varepsilon}$.} $\delta_{1}$ and $\delta_{2}$ in such a way that
$$
 \delta_{2}^{-1}N^{2-2\sigma} = \delta_{1}^{2}\delta_{2}^{B-1}T^{B}N^{(3-4\sigma)q/2} = T^{2(1-\sigma)}, 
$$
where $(q,B)$ is the couple $(q_j, B_j)$, $j = 0,1$, for which $\delta_{2}^{B-1} T^{B}N^{(3-4\sigma)q/2}$ is maximal. This is equivalent to 
\begin{align*}
	\delta_{2}= N^{2-2\sigma}T^{2\sigma-2}, \ \ \ \delta_{1}
=N^{\frac{(4B-4 +4q)\sigma-(4B-4+3q)}{4}}T^{\frac{B-2B\sigma}{2}},
\end{align*} 
whence $T^{-c}<\delta_{1} < 1$ (with e.g. $c=\max_{j=0,1}\{7q_j/4+3B_j/2\} +2$) and $T^{-1} \leq \delta_{2} \leq 1$ in view\footnote{One may verify through a monotonicity argument that also $\delta_1 \leq 1$ even if %the $q,B$ from \eqref{IvrelenN} were to switch.
$(q,B)\neq (q^{\ast},B^{\ast})$.} of \eqref{IvrelenN}. Inserting this choice in the estimate for $|R|$ gives, as $N \geq T^{1/2}$, and further imposing the restriction $\sigma \geq 1 - q/(4B+4q)$, 
\begin{align*}
	|R| & \ll  \Big(T^{2(1-\sigma)} + N^{\frac{(8B +6q)-(8B+8q)\sigma}{7}}T^{\frac{(8B-8)\sigma-(4B-8)}{7}} + N^{\frac{(16B-4+12q)-(16B+16q)\sigma }{3}}T^{\frac{16B\sigma-8B}{3}}\\
	& \ \ \ \ \ \ + N^{\frac{(20B+15q)-(20B+20q+8)\sigma}{12}}T^{\frac{(10B-2)\sigma-(5B-4)}{6}}\Big)T^{\varepsilon}
	\notag
	\\
	& \ll \left(T^{2(1-\sigma)} + T^{\frac{(3q+8)-(4q-4B+8)\sigma}{7}} + T^{\frac{(6q-2)-(8q-8B)\sigma}{3}}+ T^{\frac{(15q+16)-(20q-20B+16)\sigma}{24}}\right)T^{\varepsilon}
	\notag
	\\
	 & \ll T^{2(1-\sigma)+\varepsilon},
\end{align*}
where the second, third and fourth summand give new restrictions on $\sigma$. Summarizing, for $j = 0,1$, we obtain the set of constraints
$$\sigma \geq \frac{3q_j-6}{4q_j-4B_j-6}, \quad \sigma \geq \frac{3q_j-4}{4q_j-4B_j-3}, \quad \sigma \geq \frac{15q_j-32}{20q_j-20B_j-32}, \quad \sigma \geq \frac{3q_j+4B_j}{4q_j+4B_j},$$
provided that also $q_j>B_j+8/5$ which ensures that the denominators in the above fractions are all positive, as otherwise we would not obtain any range for $\sigma$. 
The density hypothesis holds under these restrictions\footnote{The restriction $\sigma \geq (3q_j + 4B_j)/(4q_j+4B_j)$ is implied by $\sigma \geq (3q_j - 6)/(4q_j -4B_j -6)$ if, say, $B_j\geq 3/4$.} for $\sigma$ and if $|R| \leq N$.

Now, suppose that $|R| > N$. As $N \geq T^{1/2}$, this implies that $|R| > T^{1/2}$. Select now a subset of representative well-spaced class-I zeros $R'$ such that $|R'| = \lfloor T^{1/2}\rfloor$. Now $|R'| \leq N$ and the entire analysis above can be performed for $R'$ to give $|R'| \ll T^{2(1-\sigma) + \varepsilon} \ll T^{1/2 - \varepsilon}$, if $\sigma > 3/4$ say, which is impossible (for large enough $T$). Therefore $|R|$ must have been smaller than $N$ to begin with.

It only remains to pick the best possible $q_0,q_1, B_0$ and $B_1$. In order to find admissible values, we are going to appeal to \cite[Thm.  8.2, p. 206]{Ivicbook}. Given an \emph{exponent pair} $(a,b)$, this result guarantees that $q_0 = 6$, $B_0 = 1 + \varepsilon$, $q_1 = 2(1+ 2a+2b)/a$ and $B_1 = (a+b)/a + \varepsilon$ are admissible values for \eqref{mixmo}. The couple $(q_0,B_0) = (6, 1 +\varepsilon)$ subsequently gives the restriction $\sigma \geq 6/7$.

For the other couple $(q_1,B_1)$ it turns out that the restriction $\sigma \geq (3q_{1}-6)/(4q_{1}-4B_{1}-6)$ is the critical one. Rewriting this range in terms of the exponential pair $(a,b)$ gives $\sigma \geq 1 - 1/(3a + 6b +4)$. Our task is therefore to minimize $a+2b$. To the best of our knowledge, the exponent pair $(55/194 + \varepsilon, 110/194 + \varepsilon)$ is the best available choice\footnote{The preprint \cite{TrudgianYang} claims that $(1/4 + \varepsilon ,7/12 + \varepsilon)$ is also an exponent pair, which would deliver a lower value for $a+2b$. This exponent pair is derived from applying process B on $(1/12 + \varepsilon, 3/4 + \varepsilon)$ and this was, according to the preprint, supposed to %be
have been shown in \cite{Robert}. However, it is unclear how the exponent pair $(1/12 + \varepsilon, 3/4 + \varepsilon)$ follows from \cite[Thm. 1]{Robert}. Robert also does not claim his result implies that $(1/12 + \varepsilon, 3/4 + \varepsilon)$ would be an exponent pair.} at the moment. This exponent pair is derived from first applying Process A and then Process B on the exponent pair $(13/84 + \varepsilon, 55/84 + \varepsilon)$ that Bourgain established in \cite{Bourgain2016}. The parameters then become $q_1 = 1048/55$, $B_1 = 3 + \varepsilon$ and $H(T) = T^{55/359}$. One ultimately finds that the density hypothesis is valid in the range $\sigma \geq 1407/1601$. This concludes the proof of Theorem \ref{main1}.

\appendix
\section{A zero-density estimate for the Riemann zeta function}\label{eszeta}

In this appendix we establish the zero-density estimate $N(\sigma, T) \ll T^{\frac{24(1-\sigma)}{30 \sigma -11}+\varepsilon}$ for the Riemann zeta function for a broader range for $\sigma$ than what Ivi\'c initially obtained in \cite[Thm.  11.2, Eq. 11.31]{Ivicbook}. The precise exponent in $T$ comes from an optimization with respect to the specific technology that Ivi\'c employed in his proof; the crucial factors are the exponent pair $(2/7,4/7)$ from which \cite[Thm.  8.2]{Ivicbook} delivers a bound that was used for the estimation of the class-II zeros, and the specific moments $q_0 = 6$ and $q_1 = 19$ chosen for the mixed moment argument \eqref{mixmo}. For specific $\sigma$ in the interval under consideration here, it should be possible to optimize the exponent pair and the mixed moment exponents to obtain a better exponent for $T$ in the final zero-density estimate. This is however not the main focus of the appendix and we decided not to pursue this here. Obtaining the best zero-density estimates for the Riemann zeta function by selecting the optimal exponent pairs is one of the objectives of the preprint \cite{TrudgianYang}. Our main goal here is to illustrate how Bourgain's method allows one to use Heath-Brown's double zeta sum estimate Lemma \ref{Headou} to increase the range of validity of the zero-density estimate
$$ N(\sigma, T) \ll T^{\frac{24(1-\sigma)}{30 \sigma -11}+\varepsilon},
$$
from $0.8908 \approx 155/174 \leq \sigma \leq 17/18$ to $0.8885 \approx 279/314 \leq \sigma \leq 17/18$. As this is the only place where we modify Ivi\'c's argument, we do not achieve a lower exponent for $T$ in the zero-density estimate.

\subsection{Some modifications}\label{modi}

\

As the proof method is very similar to the proof of the density hypothesis in the range $\sigma \geq 1407/1601$ for $L(s,f)$ discussed in detail in the paper, we only point out the differences. Moreover, since Ivi\'c had already shown Theorem \ref{main2} when $155/174 \leq \sigma \leq 17/18$, we shall only work under the hypothesis $279/314 \leq \sigma < 155/174$.  

We employ the same zero-detection method as in Section \ref{zedete}, with the obvious changes that $L(s,f)$ is replaced by $\zeta(s)$ and $\mu_f$ by the classical M\"obius function. Now in the calculation of $\zeta(s) M_X(s)$, when shifting the line of integration, we encounter an additional pole at $z=1-s$ which delivers the extra term $M_{X}(1)Y^{1-s}\Gamma(1-s)$. When $|\Im s| \geq \log^{2}T$, this term is however still $o(1)$ as $T \rightarrow \infty$. So, if a $\zeta$-zero $\rho = \beta + it$ satisfies $|t| \geq \log^{2}T$, it must still be either a class-I zero or a class-II zero. Instead of \eqref{NOze}, we thus obtain
\begin{equation*} 
	N(\sigma, T) \ll (|R_1|+|R_2|+1)T^{\varepsilon},
\end{equation*}	
as  $N(1/2, \log^{2}T)  \ll \log^{3} T$, say.

When Ivi\'c handles the class-II zeros $R_{2}$, he takes $Y=T^{\frac{6}{30\sigma -11}}$ and $\nu=2$. Therefore \eqref{LengthPolyF} is replaced by 
\begin{equation} \label{zetaLengthPolyF}
	T^{\frac{8}{30\sigma -11}} \leq N \leq T^{\frac{12}{30\sigma -11}+\varepsilon}.
\end{equation}
As explained above, the estimation of the class-II zeros is slightly different than in the paper, but following the argument of Ivi\'{c} \cite[Section 11.2]{Ivicbook}, one obtains that $|R_{2}|$ is bounded by \cite[Eq. 11.41]{Ivicbook},
 \begin{equation*}
 	|R_{2}| \ll (TY^{3-6\sigma}+T^{3}Y^{19(1/2-\sigma)})T^{\varepsilon} \ll T^{24(1-\sigma)/(30\sigma-11)+\varepsilon}.
 \end{equation*}
With the same technology as in Section \ref{Ivices} with the mixed moment parameters $q_0 = 6, A_0 = 1+ \varepsilon, q_1 = 19, A_1 = 3 + \varepsilon$, one finds an estimate for the representative well-spaced class-I zeros \cite[Eq. 11.42]{Ivicbook}, 
\begin{equation*}
	|R| \ll (N^{2-2\sigma}+TN^{(65-84\sigma)/6})T^{\varepsilon},
\end{equation*}
which also gives the desired $T^{24(1-\sigma)/(30\sigma -11)+\varepsilon}$ estimate, provided $N$ satisfies 
\begin{equation*}
	N \geq T^{\frac{6}{65-84\sigma}\cdot \frac{35-54\sigma}{30\sigma-11}}.
\end{equation*}
In view of \eqref{zetaLengthPolyF} the above estimate is always valid if $\sigma \geq 155/174$ and this concludes Ivi\'c's argument. For the remaining range, we may thus assume
\begin{equation} \label{zetaIvrelenN}
	T^{\frac{8}{30\sigma -11}} \leq N \leq T^{\frac{6}{65-84\sigma}\cdot \frac{35-54\sigma}{30\sigma-11}}.
\end{equation}

The analysis of Section \ref{BouD} is mostly analogous, except at the end in the treatment of case 2 where instead of the bound $N \geq T^{1/2}$ we shall use $N \geq T^{\frac{8}{30\sigma -11}}$ and instead of $|R| \leq N$ we use the modified
\begin{align} \label{zetaassum RN}
	T^{24(1-\sigma)/(30\sigma -11)-\varepsilon} \leq |R| \leq N.
\end{align}
This results in the bound 
\begin{align*} 
	& N^{2\sigma} \delta' \sum_{\alpha \in \mathcal{I}_0} |R_{\alpha}| |S \cap D_{\alpha}(\delta')| \\
	& \ \ \ \ \ \ \ll T^{\varepsilon} N^{3/2}|R|^{1/2}(1+R^{1/4}N^{-1}T^{1/2})^{1/2} \left(|S| N + |S|^{2} + |S|^{5/4} T_{0}^{1/2}\right)^{1/2}
	\\
	& \ \ \ \ \ \ \ll  T^{\varepsilon} N^{3/2}|R|^{1/2}(1+R^{1/4}T^{-8/(30\sigma -11)}T^{1/2})^{1/2} \left(|S| N + |S|^{2} + |S|^{5/4} T_{0}^{1/2}\right)^{1/2}
	\\
	& \ \ \ \ \ \ \ll  T^{\varepsilon} N^{3/2}|R|^{5/8}T^{\frac{30\sigma -27}{4(30\sigma -11)}}\left(|S| N + |S|^{2} + |S|^{5/4} T_{0}^{1/2}\right)^{1/2}.
\end{align*}
The lower inequality for $|R|$ in \eqref{zetaassum RN} was only imposed to guarantee $1 \ll R^{\frac{1}{4}}T^{\frac{-8}{30\sigma -11}}T^{\frac{1}{2}}$. Combining this with the lower inequality 
\eqref{proDSf}, we find 

\begin{align*} 
	 & N^{2\sigma} \delta_{1}^{2} |R| |S|  
	+ N^{2\sigma}\delta_{1}^{2/q}\delta_{2}^{1-1/q}|R|^{\frac{2q-1}{q}}|S|^{1/q}\ll \Big( N^{2} |R|^{5/8}|S|^{1/2}T^{\frac{30\sigma -27}{4(30\sigma -11)}} \\	
	& \ \ \ \ \ \ \ \ \ \ \ \ \ \ \ \ \ \ \ \ \ \ \ \  + N^{3/2} |R|^{5/8}|S|T^{\frac{30\sigma -27}{4(30\sigma -11)}} 
	+ \delta_{2}^{1/4}N^{3/2}T^{1/4} |R|^{5/8}|S|^{5/8}T^{\frac{30\sigma -27}{4(30\sigma -11)}}\Big) T^{\varepsilon}.
\end{align*}
With a suitable interpolation to eliminate $|S|$, one then finds after a few calculations
\begin{align*} 
	|R| \ll  T^{\varepsilon} \left(\delta_{1}^{-\frac{8}{7}} \delta_{2}^{-\frac{4}{7}}N^{\frac{16(1-\sigma)}{7}}T^{\frac{6(10\sigma -9)}{7(30\sigma -11)}} + \delta_{1}^{-\frac{16}{3}}N^{\frac{4(3-4\sigma)}{3}}T^{\frac{2(10\sigma -9)}{30\sigma -11}} + \delta_{1}^{-\frac{5}{3}}\delta_{2}^{-\frac{1}{6}}N^{\frac{2(3-4\sigma)}{3}}T^{\frac{1}{3}+\frac{10\sigma -9}{30\sigma -11}}\right).
\end{align*}

\subsection{The large value estimate}\label{zetaLVE}

\

The corresponding large value estimate then becomes
\begin{lemma} \label{zetaLVEH} Let $N \geq T^{\frac{8}{30\sigma -11}}$ and $R$ be a set of representative well-spaced class-I zeros. 
If $(q_{0}, A_{0})$ and $(q_{1}, A_{1})$ satisfy \eqref{mixmo} and
	$$
	T^{24(1-\sigma)/(30\sigma -11)-\varepsilon} \leq |R| \leq N,
	$$
	then for any $T^{-c}\ll \delta_{1}<1$ (for some $c>0$) and  $T^{-1} \leq \delta_{2} \leq 1$,
		\begin{align*} 
		|R| & \ll \big( \delta_{2}^{-1}N^{2-2\sigma} +\delta_{1}^{2}\delta_{2}^{A_{0}-1}T^{A_{0}}N^{(3-4\sigma)q_{0}/2} + \delta_{1}^{2}\delta_{2}^{A_{1}-1}T^{A_{1}}N^{(3-4\sigma)q_{1}/2} 
		\notag
		\\
		& \ \ \  + \delta_{1}^{-\frac{8}{7}} \delta_{2}^{-\frac{4}{7}}N^{\frac{16(1-\sigma)}{7}}T^{\frac{6(10\sigma -9)}{7(30\sigma -11)}}+ \delta_{1}^{-\frac{16}{3}}N^{\frac{4(3-4\sigma)}{3}}T^{\frac{2(10\sigma -9)}{30\sigma -11}} + \delta_{1}^{-\frac{5}{3}}\delta_{2}^{-\frac{1}{6}}N^{\frac{2(3-4\sigma)}{3}}T^{\frac{1}{3}+\frac{10\sigma -9}{30\sigma -11}}\big) T^{\varepsilon}.
	\end{align*}
\end{lemma}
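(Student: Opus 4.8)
The plan is to assemble Lemma \ref{zetaLVEH} by collecting the two contributions coming from Bourgain's dichotomy, exactly mirroring the derivation of Lemma \ref{LVEH} but with the arithmetic adjustments forced by the zeta-function setting. Since the statement is essentially a bookkeeping summary of the preceding analysis in Subsection \ref{modi}, the proof is short: one simply has to explain why $|R|$ is bounded by the maximum of the Case~1 terms and the Case~2 terms, and then observe that a maximum of finitely many quantities is $\ll$ their sum.

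First I would recall that, as in Subsection \ref{The Dich}, the collection of $\lceil 2/\delta_2 \rceil$ sets $R_\alpha$ covering $R$ is split according to the index sets $\mathcal{I}_0, \mathcal{I}_1, \mathcal{I}_2$, so that $|R| \ll \sum_{\alpha \in \mathcal{I}_0}|R_\alpha| + \sum_{\alpha \in \mathcal{I}_1}|R_\alpha| + \sum_{\alpha \in \mathcal{I}_2}|R_\alpha|$ and at least one of the three sums is $\gg |R|$ up to a factor $T^\varepsilon$. If the $\mathcal{I}_2$ sum dominates, the Case~1 analysis of Subsection \ref{caseIcon} applies verbatim (with $B_j$ replaced by $A_j$), yielding the first three terms $\delta_2^{-1}N^{2-2\sigma} + \delta_1^2 \delta_2^{A_0-1}T^{A_0}N^{(3-4\sigma)q_0/2} + \delta_1^2\delta_2^{A_1-1}T^{A_1}N^{(3-4\sigma)q_1/2}$; note here that the hypothesis $\sigma \ge 279/314 > 3/4$ legitimizes the discarding of the $|R_\alpha|^2$ term exactly as in the footnote to \eqref{Dichcon4}. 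If instead the $\mathcal{I}_0$ (or, symmetrically, the $\mathcal{I}_1$) sum dominates, then we are in the Case~2 situation treated in Subsection \ref{modi}; the displayed chain of inequalities there — using $N \ge T^{8/(30\sigma-11)}$ in place of $N \ge T^{1/2}$, the condition \eqref{zetaassum RN} to guarantee $1 \ll R^{1/4}T^{-8/(30\sigma-11)}T^{1/2}$, Lemmas \ref{Bourgpoly}, \ref{BJ}, \ref{Headou}, and the interpolation that eliminates $|S|$ — produces precisely the last three terms $\delta_1^{-8/7}\delta_2^{-4/7}N^{16(1-\sigma)/7}T^{6(10\sigma-9)/(7(30\sigma-11))} + \delta_1^{-16/3}N^{4(3-4\sigma)/3}T^{2(10\sigma-9)/(30\sigma-11)} + \delta_1^{-5/3}\delta_2^{-1/6}N^{2(3-4\sigma)/3}T^{1/3+(10\sigma-9)/(30\sigma-11)}$.

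Combining the two cases, $|R|$ is bounded by the larger of the Case~1 bound and the Case~2 bound, and since $\max(a,b) \le a+b$ for nonnegative reals, $|R|$ is $\ll T^\varepsilon$ times the sum of all six terms, which is exactly the asserted inequality; the constraint $T^{-c} \ll \delta_1 < 1$ is the one imposed in Subsection \ref{AnacaseII} to make the various dyadic coverings of $W_\alpha$ use only $O(\log T)$ intervals, and $T^{-1} \le \delta_2 \le 1$ comes from \eqref{Hdi}, so both hypotheses are used exactly where they were needed in the constituent arguments.

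I do not expect a genuine obstacle here, since all the substantive work — the Halász–Montgomery step, the mixed-moment insertion, the dichotomy, and the double-zeta-sum estimate — has already been carried out in Sections \ref{Ivices}, \ref{BouD} and Subsection \ref{modi}; the only mild subtlety is making sure the $T^\varepsilon$-losses accumulated across the $O(\log T)$-fold dyadic pigeonholings (on $M$, on $N(A)$, on $\delta'$, on $H$, on $\delta''$, on $W_\alpha$) and across the choice of which index set dominates are all absorbed into a single final $T^\varepsilon$, which is routine given the paper's stated convention that $\varepsilon$ may change by bounded factors. If anything, the trickiest point to state cleanly is that the Case~2 bound as written already incorporates the symmetric treatment of $\mathcal{I}_1$ (one writes $q, A$ for whichever of $(q_0,A_0), (q_1,A_1)$ is relevant and the resulting last three terms are independent of that choice, as is visible from their exponents), so no separate fourth family of terms appears.
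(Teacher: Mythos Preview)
Your proposal is correct and follows essentially the same approach as the paper: in both Lemma~\ref{LVEH} and Lemma~\ref{zetaLVEH} the paper presents no separate proof at all, merely introducing the lemma as the result of ``collecting the contributions'' from Cases~1 and~2, and your write-up makes this collection explicit in exactly the intended way. Your observation that the last three (Case~2) terms are independent of which pair $(q_j,A_j)$ is active, so that no additional family of terms is needed, is a nice point to spell out.
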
 

\subsection{Proof of Theorem \ref{main2}}\label{pth2}

\

	As we already have a bound for the class-II zeros, we are only required to estimate $|R|$. Suppose first that $T^{24(1-\sigma)/(30\sigma -11)-\varepsilon} \leq  |R| \leq N$. We apply the large value estimate Lemma \ref{zetaLVEH} with $(q_{0}, A_{0})=(6, 1+\varepsilon)$ and $(q_{1}, A_{1})=(19, 3+\varepsilon)$. 
	This gives
	\begin{align*} 
		 \quad |R|  & \ll (\delta_{2}^{-1}N^{2-2\sigma} +\delta_{1}^{2}TN^{9-12\sigma} + \delta_{1}^{2}\delta_{2}^{2}T^{3}N^{\frac{19(3-4\sigma)}{2}} + \delta_{1}^{-\frac{8}{7}} \delta_{2}^{-\frac{4}{7}}N^{\frac{16(1-\sigma)}{7}}T^{\frac{6(10\sigma -9)}{7(30\sigma -11)}}
		\notag
		\\
		& \ \ \ \ \ \  + \delta_{1}^{-\frac{16}{3}}N^{\frac{4(3-4\sigma)}{3}}T^{\frac{2(10\sigma -9)}{30\sigma -11}} + \delta_{1}^{-\frac{5}{3}}\delta_{2}^{-\frac{1}{6}}N^{\frac{2(3-4\sigma)}{3}}T^{\frac{1}{3}+\frac{10\sigma -9}{30\sigma -11}})T^{\varepsilon}.
	\end{align*}
	
	We choose the parameters $\delta_{1}$ and $\delta_{2}$ in such a way that
	$$
	\delta_{2}^{-1}N^{2-2\sigma} = \delta_{1}^{2}\delta_{2}^{2}T^{3}N^{\frac{19(3-4\sigma)}{2}} = T^{\frac{24(1-\sigma)}{30\sigma -11}} .
	$$
	This is equivalent to 
	\begin{align*} 
		\delta_{2}= N^{2-2\sigma}T^{\frac{24(\sigma-1)}{30\sigma -11}}, \ \ \ \delta_{1} =\delta_{2}^{-1}N^{\frac{19(4\sigma-3)}{4}}T^{\frac{57(1-2\sigma)}{2(30\sigma -11)}}=N^{\frac{84\sigma-65}{4}}T^{\frac{105-162\sigma}{2(30\sigma -11)}},
	\end{align*} 
	whence $T^{-c} <\delta_{1} < 1$, for $c= 13$, say, and $T^{-1} \leq \delta_{2} \leq 1$ in view of \eqref{zetaIvrelenN}. We find, using $N \geq T^{\frac{8}{30\sigma - 11}}$,
	\begin{align*}
		|R|   &\ll (T^{\frac{24(1-\sigma)}{30\sigma -11}}+TN^{9-12\sigma} +  N^{\frac{138-176\sigma}{7}}T^{\frac{612\sigma-378}{7(30\sigma -11)}} + N^{\frac{272-352\sigma }{3}}T^{\frac{452\sigma-298}{30\sigma-11}}
		\notag
		\\
		 & \ \ \ + N^{\frac{345-448\sigma }{12}}T^{\frac{282\sigma-185}{2(30\sigma-11)}+\frac{1}{3}})T^{\varepsilon}
		\notag
		\\
		& \ll  (T^{\frac{24(1-\sigma)}{30\sigma -11}}+TT^{\frac{8(9-12\sigma)}{30\sigma-11}} + T^{\frac{8(138-176\sigma)}{7(30\sigma-11)}}T^{\frac{612\sigma-378}{7(30\sigma -11)}}+ T^{\frac{8(272-352\sigma) }{3(30\sigma-11)}}T^{\frac{452\sigma-298}{30\sigma-11}}
		\notag
		\\
		&\ \ \ + T^{\frac{2(345-448\sigma) }{3(30\sigma-11)}}T^{\frac{282\sigma-185}{2(30\sigma-11)}+\frac{1}{3}})T^{\varepsilon}
		\notag
		\\
		 & \ll T^{\frac{24(1-\sigma)}{30\sigma-11}+\varepsilon}.
	\end{align*}
The second, third, fourth and fifth summand give respectively the conditions $\sigma \geq 37/42 \approx 0.8809$, $\sigma \geq 279/314 \approx 0.8885$, $\sigma \geq 605/694 \approx 0.8717$ and $\sigma \geq 659/742 \approx 0.8881$ on the range of validity of this estimate. Therefore the desired zero-density is valid under the condition $T^{24(1-\sigma)/(30\sigma -11)-\varepsilon}\leq  |R| \leq N$.

If $|R| \leq T^{24(1-\sigma)/(30\sigma -11)-\varepsilon}$, there is nothing left to prove and if $|R| \geq N$ one may take as before a sufficiently large subset of representative well-spaced class-I zeros to obtain a contradiction as $N \geq T^{\frac{8}{30\sigma - 11}} \gg T^{\frac{24(1-\sigma)}{30 \sigma -11} + \varepsilon}$ in the range under question for $\sigma$.
This completes the proof of Theorem \ref{main2}.

\end{document}